\documentclass{article}

\usepackage[a4paper]{geometry}
\usepackage{amsmath}
\usepackage{amssymb}
\usepackage{amscd}
\usepackage{latexsym}
\usepackage{graphicx}
\usepackage{mathrsfs}
\usepackage{amsthm}

\usepackage[table]{xcolor}    

\def\R {\mathbb{R}}

\def\Z {\mathbb{Z}}

\newcommand{\curl}{\mathop{\mathrm{curl}}}

\newcommand{\sign}{\mr{sign}}

\newcommand{\lk}{\ell\raisebox{-2pt}{$\!\kappa$}}
\newcommand{\coil}{\mr{Coil}}
\renewcommand{\o}{\mr{o}}


\newtheorem{theo}{Theorem}
\newtheorem{lemma}[theo]{Lemma}
\newtheorem{defin}[theo]{Definition}
\newtheorem{alg}{Algorithm}
\newtheorem{cor}[theo]{Corollary}
\newtheorem{remark}[theo]{Remark}
\newcommand{\mb}{\mathbf}
\newcommand{\bs}{\boldsymbol}
\newcommand{\lra}{\longrightarrow}
\newcommand{\mr}{\mathrm}
\newcommand{\mc}{\mathcal}

\newcommand{\mk}{\mathfrak}

\newcommand{\T}{\mc{T}}
\newcommand{\vv}{\mb{v}}
\newcommand{\E}{\mc{E}}
\newcommand{\F}{\mc{F}}
\newcommand{\G}{\mc{G}}
\newcommand{\A}{\mc{A}}
\newcommand{\B}{\mc{B}}
\newcommand{\K}{\mc{K}}

\newcommand{\sangle}{{\scriptscriptstyle\angle}}
\newcommand{\cb}{\sigma_{_{\!\B'}}}



\begin{document}

\title{Efficient construction of homological Seifert surfaces}

\author{Ana Alonso Rodr\'{i}guez\thanks{Dipartimento di Matematica, Universit\`a di Trento, 38123 Povo (Trento), Italy} \and Enrico Bertolazzi \thanks{Dipartimento di Ingegneria Meccanica e Strutturale, Universit\`a di Trento, 38123 Mesiano (Trento), Italy} \and Riccardo Ghiloni\footnotemark[1] \and Ruben Specogna\thanks{Universit\`a di Udine, Dipartimento di Ingegneria Elettrica, Gestionale e
Meccanica, Via delle Scienze 206, 33100 Udine, Italy}}


\maketitle

\begin{abstract}
Let $\Omega$ be a bounded domain of $\R^3$ whose closure $\overline{\Omega}$ is polyhedral, and let $\T$ be a trian\-gulation of $\overline{\Omega}$. Assuming that the boundary of $\Omega$ is sufficiently regular, we provide an explicit formula for the computation of homological Seifert surfaces of any $1$-boundary $\gamma$ of $\T$; namely, $2$-chains of $\T$ whose boundary is $\gamma$. It is based on the existence of special spanning trees of the complete dual graph of $\T$, and on the computation of certain linking numbers associated with those spanning trees. If the triangulation $\T$ is fine, the explicit formula is too expensive to be used directly. For this reason, making also use of a simple elimination procedure, we devise a fast algorithm for the computation of homological Seifert surfaces. Some numerical experiments illustrate the efficiency of this algorithm.
\end{abstract}


\section{Introduction}

\subsection{The results}

A crucial concept of knot theory is the one of Seifert surface. A Seifert surface of a polygonal knot of $\R^3$ is an orientable nonsingular polyhedral surface of $\R^3$ having the knot as its boundary. This notion has a natural counterpart in homology theory.
Let $\Omega$ be a bounded domain of $\R^3$ whose closure $\overline{\Omega}$ in $\R^3$ is polyhedral, and let $\T$ be a triangulation of $\overline{\Omega}$. We assume that the boundary $\partial\Omega$ of $\Omega$ satisfies a mild regularity condition that we specify at the end of this section.
A $1$-cycle $\gamma$ of $\T$ is a formal linear combination (over integers) of oriented edges of $\T$ with zero boundary. The $1$-cycle $\gamma$ is said to be a $1$-boundary of $\T$ if it is equal to the boundary of a formal linear combination $S$ of oriented faces of $\T$. If such a $S$ exists, we call it homological Seifert surface of $\gamma$ in $\T$.

The identification of homological Seifert surfaces is a fundamental task in very different fields. For example, they appear in Stokes' theorem: given a sufficiently regular vector field ${\bf Z}$ defined in $\overline{\Omega}$ and a $1$-boundary $\gamma$ of $\T$, we have that $\oint_\gamma {\bf Z} \cdot ds = \int_S \curl {\bf Z} \cdot \nu$, where $S$ is any homological Seifert surface of $\gamma$ in $\T$. As a consequence, homological Seifert surfaces are a powerful tool in computational electromagnetism for the construction of discrete vector potentials; namely, vector fields with assigned discrete curl (see, e.g., \cite{Boss98,GK04,AVbook}).

Homological Seifert surfaces are also a key point in the construction of bases of the relative homology group $H_2(\overline  \Omega, \partial \Omega; \Z)$. Let $\{\sigma'_m\}_{m=1}^g$ be $1$-boundaries of $\T$ contained in $\partial\Omega$ whose homo\-logy classes in $\R^3 \setminus \Omega$ forms a basis of the first homology group of $\R^3 \setminus \Omega$. If $S_m$ is a homological Seifert surface of $\sigma'_m$ in $\T$ for each $m \in \{1,\ldots,g\}$, then the Poincar\'e--Lefschetz and the Alexander duality theorems ensure that the relative homology classes $[S_m]$ of the $S_m$'s form a basis
 of $H_2(\overline  \Omega, \partial \Omega; \Z)$.

The problem of constructing homological Seifert surfaces is connected to the more geometric one of finding genuine Seifert surfaces. If the $1$-boundary $\gamma$ of $\T$ is a polygonal knot, then a homological Seifert surface of $\gamma$ determinates a Seifert surface of $\gamma$ if the union of its faces is an orientable nonsingular polyhedral surface of $\R^3$. The homological Seifert surfaces we compute do not have necessarily this regularity. However, we think that, in future investigations, this approach could be taken as the starting point to obtain Seifert surfaces.

Even if the question of computing homological Seifert surfaces is very natural and significant, to the best knowledge of the authors, there are not general and efficient algorithms to compute such surfaces. Given an orientation of the edges and of the faces of the triangulation $\T$ of $\overline{\Omega}$, the problem can be formulated as a linear system with as many unknowns as faces and as many equations as edges of $\T$. The matrix $A$ of this linear system is the incidence matrix between faces and edges of $\T$. This matrix is very sparse because it has just three nonzero entries per columns and the number of nonzero entries on each row is equal to the number of faces incident on the edge corresponding to the row. We are looking for an integer solution of this sparse rectangular linear system. This kind of problems are usually solved using the Smith normal form, a computationally demanding algorithm even in the case of sparse matrices (see e.g. \cite{Mun84}, \cite{DSV01}).

A first difficulty to devise a general and efficient algorithm to compute a homological Seifert surface $S$ of a given $1$-boundary $\gamma$ of $\T$ is that this problem has not a unique solution. Indeed, the kernel of $A$ is never trivial. If $\mk{t}$ is the number of tetrahedra of $\T$ and $\Gamma_0,\Gamma_1,\ldots,\Gamma_p$ are the connected components of $\partial\Omega$, then $\ker(A)$ is a free abelian group of rank $\mk{t}+p \,$; namely, $\ker(A)$ is isomorphic to $\Z^{\mk{t}+p}$. One of its basis is given by the boundaries of tetrahedra of $\T$ and by the $2$-chains $\gamma_1,\ldots,\gamma_p$ associated with the triangulations of $\Gamma_1,\ldots,\Gamma_p$ induced by $\T$. This follows easily from the fact that the third homology group of $\overline{\Omega}$ is null and the $2$-chains $\gamma_1,\ldots,\gamma_p$ represent a basis of the second homology group of $\overline{\Omega}$ (see Remark~\ref{rem:intro} below).

A natural strategy to obtain a unique solution $S$ is to add $\mk{t}+p$ equations, by setting equal to zero the unknowns corresponding to suitable faces $f_1,\ldots,f_{\mk{t}+p}$ of $\T$. From the geometric point of view, this is equivalent to impose that the homological Seifert surface $S$ of $\gamma$ does not contain the faces $f_1,\ldots,f_{\mk{t}+p}$. Now the problem is to understand how to choose such faces. Our idea to make this choice is to use a suitable spanning tree of the dual complex of $\T$.
More precisely, we introduce the \textit{complete dual graph of $\T$} denoted by $\A'$. Let $F$ be the set of faces of $\T$, $F_\partial$ the set of faces of $\T$ contained in $\partial\Omega$ and $E_\partial$ the set of edges of $\T$ contained in $\partial\Omega$. The dual edge $\epsilon'_f$ of a face $f \in F$ and the dual edge $\epsilon'_\ell$ of an edge $\ell \in E_\partial$ are defined in the following way. If $f \in F_\partial$, then it is contained in a unique tetrahedron $t$ and $\epsilon'_f:=\{B(f),B(t)\}$, where $B(f)$ is the barycenter of $f$ and $B(t)$ the barycenter of $t$. If $f$ is an internal face of $\T$ (namely $f \in F \setminus F_\partial$), then it is the common face of exactly two tetrahedra $t_1$ and $t_2$, and $\epsilon'_f:=\{B(t_1),B(t_2)\}$. Similarly, if $\ell \in E_\partial$, then it is the common edge of exactly two faces $f_1,f_2$ in $F_\partial$, and $\epsilon'_f:=\{B(f_1),B(f_2)\}$. The vertices of $\A'$ are the barycenters of tetrahedra of $\T$ and the barycenters of faces in $F_\partial$, and the edges of $\A'$ are the dual edges $\{\epsilon'_f\}_{f \in F}$ and $\{\epsilon'_\ell\}_{\ell \in E_\partial}$.
Let $\B'$ be a spanning tree of $\A'$. Denote by $N_{\B'}$ the number of faces of $\T$ whose dual edge belongs to $\B'$; namely, the number of edges of $\B'$ not contained in $\partial\Omega$. It is not difficult to see that, for all spanning tree $\B'$ of $\A'$, $N_{\B'}\ge \mk{t}+p$. The equality holds true if and only if, for each $i \in \{ 0,1,\dots,p \}$, the graph induced by $\B'$ on $\Gamma_i$ is a spanning tree of the graph induced by $\A'$ on $\Gamma_i$ (see Remark~\ref{rem:intro}). If the spanning tree $\B'$ of $\A'$ has the latter property, then we call it \textit{Seifert dual spanning tree of $\T$} (see Definition~\ref{def:sette}).

Our main result, Theorem \ref{thm:main}, shows that if $\B'$ is a Seifert dual spanning tree, then, for every $1$-boundary $\gamma$ of $\T$, there exists a unique homological Seifert surface $S$ of $\gamma$ in $\T$, which does not contain faces of $\T$ whose dual edges belong to $\B'$. Furthermore, if $f$ is a face of $\T$ whose dual edge $\epsilon'_f$ does not belong to $\B'$, then $f$ appears in $S$ with a coefficient equal to the linking number between $\gamma$ (suitably retracted inside $\overline{\Omega}$) and the unique $1$-cycle $\cb(\epsilon'_f)$ of $\A'$ with all the edges except $\epsilon'_f$ contained in $\B'$.

As a byproduct, in Theorem \ref{thm:internal}, we solve completely the related problem concerning the existence and the construction of internal homological Seifert surfaces of $\gamma$; namely, homological Seifert surfaces of $\gamma$ formed only by internal faces of $\T$.

The construction of Seifert dual spanning trees of $\T$ is quite easy and the computation of the linking number between two simplicial $1$-cycles of $\R^3$ can be performed in a very accurate and efficient way (see \cite{BG12,Arai13}). However, for a fine triangulation $\T$, the number of faces whose dual edge does not belong to a given Seifert dual spanning tree of $\T$ is very large: it is equal to $\mk{e}-\mk{v}+1-g \geq \frac{1}{2}\mk{v}+1-g$, where $\mk{e}$ is the number of edges of $\T$, $\mk{v}$ is the number of vertices of $\T$ and $g$ is the first Betti number of $\overline{\Omega}$ (see Section~\ref{sec:elim_alg}). Thus, the use of the explicit formula in terms of linking number turns to be too expensive. To overcome this difficulty, we adopt an elimination procedure, similar to the one proposed by Webb and Forghani in \cite{WF89} for the solution of three-dimensional magnetostatic problems. When this procedure fails, one can compute a new unknown using the explicit formula and then restart the elimination algorithm.

We remark that what developed in this paper for simplicial complexes extends to general polyhedral cell complexes; namely, finite regular CW complexes.

The remainder of the paper is organized as follows. We conclude this introductory section by precising the weak topological requirements on the domain $\Omega$. In Section \ref{sec:preliminaries}, we recall some classical homological notions and constructions, and we introduce some new geometric concepts, as \emph{corner edge, coil} and \emph{plug}. Section \ref{sec:results} is devoted to the presentation and the proof of our main result (Theorem \ref{thm:main}) and of some of its consequences (Theorem \ref{thm:internal} and Corollary \ref{cor:internal}). In Section \ref{sec:elim_alg}, we describe the above mentioned elimination algorithm to improve the implementation of our main theorem. Finally, in Section \ref{sec:numerical}, we perform several numerical experiments of the algorithm.


\subsection{Topological hypotheses on the domain $\bs{\Omega}$}

The results of this paper are valid on very general domains that we are going to describe. A compact connected subset $\Gamma$ of $\R^3$ is called \textit{locally flat surface} if, for every point $x \in \Gamma$, there exist an open neighborhood $U_x$ of $x$ in $\R^3$ and a homeomorphism $\phi_x:U_x \lra \R^3$ such that $\phi_x(U_x \cap \Gamma)=P$, where $P$ is the coordinate plane $\{(x,y,z) \in\R^3 \, | \, z=0\}$. Suppose that $\Gamma$ is a locally flat surface. Thanks to the Jordan--Brouwer Separation Theorem, $\R^3 \setminus \Gamma$ consists of two connected components, one bounded $\Omega(\Gamma)$ and one unbounded $\Omega'(\Gamma)$, each of which has $\Gamma$ as its boundary (see \cite{Mun84}). In particular, $\Gamma$ is an orientable surface; topologically, a $2$-sphere with $g_\Gamma$ handles, where $g_\Gamma$ is called genus of $\Gamma$. There exist an open neighborhood $N$ of $\Gamma$ in $\overline{\Omega(S)}$ and a homeomorphism $\psi:\Gamma \times [0,1) \lra N$ such that $\psi(x,0)=x$ for every $x \in \Gamma$ (see \cite{Brown62}). The neighborhood $N$ is called \emph{collar} of $\Gamma$ in $\overline{\Omega(S)}$. The surface $\Gamma$ has a similar collar in $\overline{\Omega'(\Gamma)}$.

Let $\Omega$ be a bounded domain of $\R^3$. The boundary $\partial\Omega$ of $\Omega$ is said to be \textit{locally flat} if it is a finite union of pairwise disjoint locally flat surfaces. Suppose that $\Omega$ has locally flat boundary. Denote by $\Gamma_0,\Gamma_1,\ldots,\Gamma_p$ the connected components of $\partial\Omega$, which are locally flat surfaces. Without loss of generality, we can assume that $\Gamma_0$ is the ``external'' connected component of $\partial\Omega$ and $\Gamma_1,\ldots,\Gamma_p$ are the ``internal'' ones; namely, $\Omega=\Omega(\Gamma_0) \cap \bigcap_{i=1}^p\Omega'(\Gamma_i)$. Since each $\Gamma_i$ has a collar both in $\overline{\Omega(\Gamma_i)}$ and in $\overline{\Omega'(\Gamma_i)}$, it follows that $\partial\Omega$ has a collar both in $\overline{\Omega}$ and in $\R^3 \setminus \Omega$ too. Suppose that $\Omega$ is also polyhedral; namely, its closure $\overline{\Omega}$ in $\R^3$ is also triangulable. Let $\T$ be a (tetrahedral) triangulation of $\overline{\Omega}$ and let $\T_\partial$ be the triangulation induced by $\T$ on $\partial\Omega$. The reader observes that each edge in $\T_\partial$ belongs to exactly two faces of $\T_\partial$ and each face in $\T_\partial$ belongs to a unique tetrahedron of $\T$.

It is worth recalling that there is no topological difference between locally flat, polyhedral and smooth domains, where ``smooth'' means ``of class $\mc{C}^\infty$''. In fact, given any bounded domain $\Omega$ with locally flat boundary, there exist homeomorphisms $h,k:\R^3 \lra \R^3$ such that the domain $h(\Omega)$ has polyhedral closure and the domain $k(\Omega)$ has smooth boundary.
For further information on the topology of  three-dimensional domains, we refer the reader to \cite{BFG10}.

\textit{Throughout the remainder of this paper, $\Omega$ will denote a bounded polyhedral domain of $\R^3$ with locally flat boundary}.


\section{Preliminary homological notions} \label{sec:preliminaries}

This section is organized in three subsections. In the first one, we recall some basic concepts of simplicial homology theory concerning the fixed bounded polyhedral domain $\Omega$ of $\R^3$ with locally flat boundary, equipped with a triangulation $\T$. The second subse\-ction deals with the description of part of the dual complex of $\T$ and the related definitions of complete dual graph, coil and plug of $\T$. In the last subse\-ction, we recall the notion and some properties of linking number.


\subsection{Cycles, boundaries and homological Seifert surfaces} \label{subsec:cbs}

We start by recalling some notions of homology theory.
The basic concept is that of chain. A 0-chain of $\R^3$ is a finite formal linear combination $\sum_{i=1}^n p_i {\bf v}_i$ of points ${\bf v}_i \in \R^3$ with integer coefficients $p_i$. We denote by $C_0(\R^3, \Z)$ the abelian group of 0-chains of $\R^3$.

Given two different points ${\bf a},{\bf b}$ in $\R^3$, we denote by $[{\bf a}, {\bf b}]$ the oriented segment of $\R^3$ from ${\bf a}$ to ${\bf b}$; namely, the segment $\{t\mb{a}+s\mb{b} \in \R^3 \, | \, t,s \geq 0, t+s=1\}$ of $\R^3$ of vertices $\mb{a},\mb{b}$, together with the ordering $(\mb{a},\mb{b})$ of its vertices. The segment of $\R^3$ of vertices $\mb{a}$, $\mb{b}$ is called support of $[\mb{a},\mb{b}]$ and it is denoted by $|[\mb{a},\mb{b}]|$. The unit tangent vector $\bs{\tau}([\mb{a},\mb{b}])$ of the oriented segment $[{\bf a}, {\bf b}]$ is given by  $\bs{\tau}([{\bf a}, {\bf b}]):=\frac{{\bf b} - {\bf a}}{| {\bf b} - {\bf a} |}$. A (piecewise linear) $1$-chain of $\R^3$ is a finite formal linear combination $\sum_{i=1}^m a_i e_i$ of oriented segments $e_i=[{\bf a}_i, {\bf b}_i]$ of $\R^3$ with integer coefficients $a_i$. We identify $[{\bf b}, {\bf a}]= -[{\bf a}, {\bf b}]$ and we denote by $C_1(\R^3, \Z)$ the abelian group of $1$-chains in $\R^3$.

Analogously, if ${\bf a}$, ${\bf b}$, ${\bf c}$ are three different not aligned points in $\R^3$, we denote by $[{\bf a}, {\bf b}, {\bf c}]$ the oriented triangle of $\R^3$; namely, the triangle $\{t\mb{a}+s\mb{b}+u\mb{c} \in \R^3 \, | \, t,s,u \geq 0, t+s+u=1\}$ of $\R^3$ of vertices $\mb{a}$, $\mb{b}$, $\mb{c}$, together with the ordering $(\mb{a},\mb{b},\mb{c})$ of its vertices. The triangle of $\R^3$ of vertices $\mb{a},\mb{b},\mb{c}$
is called support of $[\mb{a},\mb{b},\mb{c}]$ and it is denoted by  $|[\mb{a},\mb{b},\mb{c}]|$. The unit normal vector $\bs{\nu}([\mb{a},\mb{b},\mb{c}])$ of the oriented triangle $[{\bf a}, {\bf b}, {\bf c}]$ is obtained by the right hand rule: $\bs{\nu}([{\bf a}, {\bf b}, {\bf c}]):=\frac{ ({\bf b} - {\bf a}) \times ({\bf c} - {\bf a})}{| ({\bf b} - {\bf a}) \times ({\bf c} - {\bf a})|}$. A (piecewise linear) 2-chain of $\R^3$ is a finite formal linear combination $\sum_{i=1}^p b_i f_i$ of oriented triangles $f_i=[{\bf a}_i, {\bf b}_i, {\bf c}_i]$ of $\R^3$ with integer coefficients $b_i$. If $\rho: \{{\bf a}, {\bf b}, {\bf c} \} \lra \{ {\bf a}, {\bf b}, {\bf c}\}$ is a permutation, we identify $[\rho({\bf a}), \rho({\bf b}), \rho({\bf c})]=[{\bf a}, {\bf b}, {\bf c}]$ if $\boldsymbol{\nu}([\rho({\bf a}), \rho({\bf b}), \rho({\bf c})])=\boldsymbol{\nu}([{\bf a}, {\bf b}, {\bf c}])$ and $[\rho({\bf a}), \rho({\bf b}), \rho({\bf c})]=-[{\bf a}, {\bf b}, {\bf c}]$ if $\boldsymbol{\nu}([\rho({\bf a}), \rho({\bf b}), \rho({\bf c})])=-\boldsymbol{\nu}([{\bf a}, {\bf b}, {\bf c}])$. We denote by $C_2(\R^3, \Z)$ the abelian group of 2-chains in $\R^3$.

Finally, if ${\bf a}$, ${\bf b}$, ${\bf c}$, ${\bf d}$ are four different not coplanar points in $\R^3$, we denote by $[{\bf a}, {\bf b}, {\bf c}, {\bf d}]$ the oriented tetrahedron of $\R^3$; namely, the tetrahedron $\{t\mb{a}+s\mb{b}+u\mb{c}+v\mb{d} \in \R^3 \, | \, t,s,u,v \geq 0, t+s+u+v=1\}$ of $\R^3$ of vertices $\mb{a}$, $\mb{b}$, $\mb{c}$, $\mb{d}$, together with the ordering $(\mb{a},\mb{b},\mb{c},\mb{d})$ of its vertices. The tetrahedron of $\R^3$ of vertices $\mb{a},\mb{b},\mb{c},\mb{d}$ is called support of the oriented tetrahedron $[\mb{a},\mb{b},\mb{c},\mb{d}]$ and it is denoted by $|[\mb{a},\mb{b},\mb{c},\mb{d}]|$.  A (piecewise linear) 3-chain of $\R^3$ is a finite formal linear combination $\sum_{i=1}^q d_i t_i$ of oriented tetrahedra $t_i=[{\bf a}_i, {\bf b}_i, {\bf c}_i, {\bf d}_i]$ of $\R^3$ with integer coefficients $d_i$. If $\rho: \{{\bf a}, {\bf b}, {\bf c}, {\bf d} \} \lra \{ {\bf a}, {\bf b}, {\bf c}, {\bf d}\}$ is a permutation, we identify $[\rho({\bf a}), \rho({\bf b}), \rho({\bf c}), \rho({\bf d})]=[{\bf a}, {\bf b}, {\bf c}, {\bf d}]$ if $\rho $ is an even permutation and $[\rho({\bf a}), \rho({\bf b}), \rho({\bf c}), \rho({\bf d})]=-[{\bf a}, {\bf b}, {\bf c}, {\bf d}]$ if $\rho $ is an odd permutation. We denote by $C_3(\R^3, \Z)$ the abelian group of 3-chains in $\R^3$.

We remark that, if all the coefficients in one of the preceding finite formal linear combinations are equal to zero, then we obtain the null element of the corresponding abelian group.

Let $k \in \{0,1,2,3\}$ and let $c=\sum_{i=1}^rc_iz_i$ be a $k$-chain of $\R^3$, where the $c_i$'s are integers and the $z_i$'s are points, oriented segments, oriented triangles or oriented tetrahedra of $\R^3$ if $k=0,1,2$ or $3$, respectively. Denote by $I_c$ the set of indices $i \in \{1,\ldots,r\}$ such that $c_i \neq 0$. The support $|c|$ of $c$ is the subset of $\R^3$ defined as the union $\bigcup_{i \in I_c}|z_i|$. 
We precise that $|c|=\emptyset$ if $c=0$. Moreover $|z_i|=\{z_i\}$ (and hence $|c|=\{z_i \in \R^3 \, | \, c_i \neq 0\}$) if $k=0$.

For every $k \in \{1,2,3\}$, let us define the boundary operator $\partial_k: C_k(\R^3;\Z) \lra C_{k-1}(\R^3;\Z)$. For every oriented segment $e=[{\bf a},{\bf b}]$, for every oriented triangle $f=[{\bf a}, {\bf b}, {\bf c}]$, and for every oriented tetrahedron $t=[{\bf a}, {\bf b}, {\bf c}, {\bf d}]$ of $\R^3$, we set $\partial_1 e:={\bf b}-{\bf a}$, $\partial_2f:=[{\bf b},{\bf c}] - [{\bf a},{\bf c}] + [{\bf a},{\bf b}]$ and $\partial_3t:=
[{\bf b},{\bf c},{\bf d}] -[{\bf a},{\bf c},{\bf d}]+[{\bf a},{\bf b},{\bf d}] -
[{\bf a},{\bf b},{\bf c}]$. Now we extend these definitions to all the $k$-chains of $\R^3$ by linearity. The reader observes that $\partial_1(\partial_2f)=(\mb{b}-\mb{a})+(\mb{c}-\mb{b})-(\mb{c}-\mb{a})=0$. In this way, by linearity, we have that $\partial_1 \circ \partial_2=0$ on the whole $C_2(\R^3;\Z)$. Analogously, we have that $\partial_2 \circ \partial_3=0$ on the whole $C_3(\R^3;\Z)$.

A $1$-chain $\gamma$ of $\R^3$ is called \textit{$1$-cycle of $\R^3$} if $\partial_1 \gamma=0$. The $1$-chain $\gamma$ is said to be a \textit{$1$-boundary of $\R^3$} if there exists a $2$-chain $S$ of $\R^3$ such that $\partial_2S=\gamma$. In this situation, we say that $S$ is a \textit{homological Seifert surface of $\gamma$ in $\R^3$}. Since $\partial_1 \circ \partial_2=0$, every $1$-boundary of $\R^3$ is also a $1$-cycle of $\R^3$. Actually, $\R^3$ is contractible (namely, it can be continuously deformed to a point) and hence the converse is true as well: every $1$-cycle of $\R^3$ is also a $1$-boundary of $\R^3$. In other words, a $1$-chain  of $\R^3$ has a homological Seifert surface in $\R^3$ if and only if it is a $1$-cycle of $\R^3$.

Let $Y$ be a subset of $\R^3$ and let $\eta$ be a $1$-cycle of $\R^3$ with $|\eta| \subset Y$. We say that \textit{$\eta$ bounds in $Y$} if $\eta$ admits a homological Seifert surface $S$ in $\R^3$ with $|S| \subset Y$. Given another $1$-cycle $\eta'$ of $\R^3$ with $|\eta'| \subset Y$, we say that \textit{$\eta$ and $\eta'$ are homologous in $Y$} if $\eta-\eta'$ bounds in $Y$.

Let $\Omega$ be the fixed bounded polyhedral domain of $\R^3$ with locally flat boundary and let $\T=(V,E,F,K)$ be a finite triangulation of $\overline\Omega$, where $V$ is the set of vertices, $E$ the set of edges, $F$ the set of faces and $K$ the set of tetrahedra of $\T$.

Let us fix an orientation (namely, an ordering of vertices) of each edge, face and tetrahedron of $\T$. This can be done as follows. Choose a total ordering $({\bf v}_1,\ldots,{\bf v}_{\mk{v}})$ of the elements of $V$. If $e=\{\mb{v}_i,\mb{v}_j\} \in E$ is an edge of $\T$ of vertices $\vv_i,\vv_j$ with $1 \leq i < j \leq \mk{v}$, then $e$ determines the oriented segment $[\vv_i,\vv_j]$ of $\R^3$. Analogously, the face $f=\{\vv_i,\vv_j,\vv_k\} \in F$ of $\T$ of vertices $\vv_i,\vv_j,\vv_k$ with $1 \leq i<j<k \leq \mk{v}$ and the tetrahedron $t=\{\vv_i,\vv_j,\vv_k, \vv_l\} \in K$ of $\T$ with $1 \leq i<j<k<l \leq \mk{v}$ determine the oriented triangle $[\vv_i,\vv_j,\vv_k]$ of $\R^3$ and the oriented tetrahedron $[\vv_i,\vv_j,\vv_k, \vv_l]$ of $\R^3$, respectively. In what follows, we denote again by $e$, $f$ and $t$, the \emph{oriented edges} of $\T$, the \emph{oriented faces} of $\T$ and the \emph{oriented tetrahedra} of $\T$, respectively. We indicate by $\E$, $\F$ and $\K$ the sets of oriented edges, oriented faces and oriented tetrahedra of $\T$, respectively.

A $k$-chain of $\T$ is a formal linear combination of vertices in $V$, oriented edges in $\E$, oriented faces in $\F$ and oriented tetrahedra in $\K$ for $k=0,1,2$ and $3$, respectively. We denote by $C_k(\T;\Z)$ the abelian subgroup of $C_k(\R^3;\Z)$ consisting of all $k$-chains of $\T$. Observe that the boundary operators $\partial_k$ preserve the chains of $\T$; namely, $\partial_k(C_k(\T;\Z)) \subset C_{k-1}(\T;\Z)$ if $k \in \{1,2,3\}$.

A $1$-chain $\gamma$ of $\T$ is called \textit{$1$-cycle of $\T$} if $\partial_1 \gamma = 0$, and it is called \textit{$1$-boundary of $\T$} if there exists a $2$-chain $S$ of $\T$ such that $\partial_2 S=\gamma$. Two $1$-cycles $\gamma$ and $\gamma'$ of $\T$ are said to be \textit{homologous in $\T$} if $\gamma-\gamma'$ is a $1$-boundary of $\T$. Denote by $Z_1(\T;\Z)$ the set of all $1$-cycles of $\T$ and by $B_1(\T;\Z)$ the set of all $1$-boundaries of $\T$. Since $\partial_1$ and $\partial_2$ are linear maps, and $\partial_1 \circ \partial_2=0$, we have that $Z_1(\T;\Z)$ and $B_1(\T;\Z)$ are abelian subgroups of $C_1(\T;\Z)$, and $B_1(\T;\Z) \subset Z_1(\T;\Z)$.

These concepts allow to define the first homology group $H_1(\T;\Z)$ of $\T$ as the abelian group of all homology classes of
$1$-cycles of $\T$. More precisely, we have:
\[
H_1(\T;\Z):=Z_1(\T;\Z)/B_1(\T;\Z).
\]
This quotient group is a free abelian group; namely, it is isomorphic to $\Z^g$, where $g$ is the rank of $H_1(\T;\Z)$. The integer $g$ does not depend on $\T$, but only on $\overline{\Omega}$, and is called first Betti number of $\overline{\Omega}$ (see Munkres~\cite[p.\ 24]{Mun84}). For this reason, one can write $H_1(\overline{\Omega};\Z)$ in place of $H_1(\T;\Z)$. The group $H_1(\overline{\Omega};\Z)$ contains many geometric and analytic informations concerning $\overline{\Omega}$. For example, thanks to the Hodge decomposition theorem, we know that $g$ is equal to the dimension of the real vector space of all harmonic vector fields of $\Omega$ tangent to the boundary $\partial\Omega$.

It is worth recalling that $\overline{\Omega}$ is homologically trivial (that is, $g=0$) if and only if it is simply connected (see \cite[Corollary 3.5]{BFG10} for a proof). This equivalence continues to hold for $2$-dimensional locally flat polyhedral domains, but it is false in dimension $\geq 4$ (see \cite[Remarks 3.9 and 3.10]{BFG10}).

Let $\T_\partial=(V_{\partial},E_{\partial},F_{\partial})$ be the triangulation of $\partial\Omega$ induced by $\T$; namely, we have that $V_\partial=V \cap \partial\Omega$, $E_\partial$ is the set of edges of $\T$ with vertices in $V_\partial$ and $F_\partial$ is the set of faces of $\T$ with vertices in $V_\partial$. Denote by $\E_\partial$ and $\F_\partial$ the sets of oriented edges and of oriented faces of $\T$ determined by the edges in $E_\partial$ and the faces in $F_\partial$, respectively. We have:
\begin{center}
$\E_\partial=\big\{e \in \E \, \big| \, |e| \subset \partial\Omega\big\} \;$ and $\; \F_\partial=\big\{f \in \F \, \big| \, |f| \subset \partial\Omega\big\}$.
\end{center}

A $1$-chain of $\T_\partial$ is a formal linear combination of oriented edges in $\E_\partial$ and a $2$-chain of $\T_\partial$ a formal linear combination of oriented faces in $\F_\partial$. We denote by $C_k(\T_\partial;\Z)$ the abelian subgroup of $C_k(\T;\Z)$ consisting of $k$-chains of $\T_\partial$ for $k=1,2$. The notions of $1$-cycle and of $1$-boundary of $\T_\partial$ can be defined in the natural way: a $1$-chain $\gamma$ of $\T_\partial$ is a $1$-cycle of $\T_\partial$ if $\partial_1 \gamma = 0$, and it is a $1$-boundary of $\T_\partial$ if there exists a $2$-chain $S$ of $\T_\partial$ such that $\partial_2 S=\gamma$. The first homology group $H_1(\T_\partial;\Z)$ of $\T_\partial$ is the quotient group $\ker(\partial_1)$ modulo $\mr{Image}(\partial_2)$:
\[
H_1(\T_\partial;\Z):=\ker(\partial_1)/\mr{Image}(\partial_2).
\]
The isomorphic class of the group $H_1(\T_\partial,\Z)$ does not depend on $\T_\partial$, but only on $\partial \Omega$. In this way, one can write $H_1(\partial\Omega;\Z)$ in place of $H_1(\T_\partial;\Z)$. The group $H_1(\partial\Omega;\Z)$ is free and its rank is equal to $2g$, where $g$ is the first Betti number of $\overline \Omega$ (see \cite[Section 3.4]{BFG10}).

Let us introduce the notions of corner edge, of corner face and of corner tetrahedron of $\T$.
Let $e=\{\mb{v},\mb{w}\}$ be an edge of $\T$. We say that $e$ is a \textit{corner edge of $\T$} if $e \in E_\partial$ and there exist two distinct vertices $\mb{z}^*$ and $\mb{z}^{**}$ in $V_\partial \setminus \{\mb{v},\mb{w}\}$ such that the $3$-sets $f^*=\{\mb{v},\mb{w},\mb{z}^*\}$ and $f^{**}=\{\mb{v},\mb{w},\mb{z}^{**}\}$ are faces of $\T$ in $F_\partial$, and the $4$-set $t^*=\{\mb{v},\mb{w},\mb{z}^*,\mb{z}^{**}\}$ is a tetrahedron in $\T$. If $e$ has this property, then we call $f^*$ and $f^{**}$ \textit{corner faces of $\T$ associated with $e$}, and $t^*$ \textit{corner tetrahedron of $\T$ associated with $e$}, see Figure~\ref{fig:corner}. A corner face of $\T$ associated with some corner edge of $\T$ is called \textit{corner face of $\T$}. Similarly, a corner tetrahedron of $\T$ associated with some corner edge of $\T$ is called a \textit{corner tetrahedron of $\T$}.

\begin{figure}[!htb]
\centering
 \includegraphics[width=.55\textwidth]{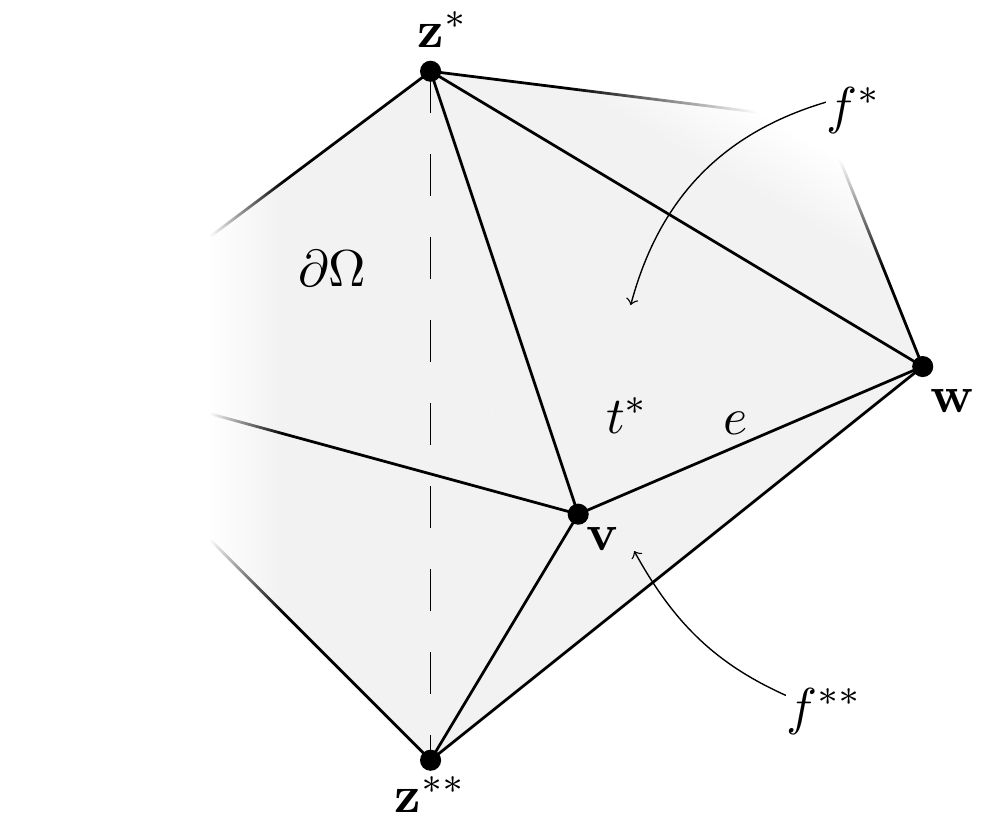}
  \caption{The corner edge $e$ and the corner faces $f^*$ and $f^{**}$.} \label{fig:corner}
\end{figure}

We denote by $E_\partial^\sangle$, $F_\partial^\sangle$ and $K_\partial^\sangle$ the sets of corner edges, of corner faces and of corner tetrahedra of $\T$, respectively. Moreover, we indicate by $\E_\partial^\sangle$ the sets of oriented edges in $\E_\partial$ determined by the corner edges of $\T$. Given a $1$-chain $\gamma=\sum_{e \in \E}a_ee$ of $\T$, we say that $\gamma$ is \textit{corner-free} if it does not contain any corner oriented edge; namely, if $a_e=0$ for every $e \in \E_\partial^\sangle$. Moreover, we call $\gamma$ \textit{internal} if it does not contain any boundary oriented edge; namely, if $a_e=0$ for every $e \in \E_\partial$. Evidently, if $\gamma$ is internal, then it is also corner-free. Similarly, given a $2$-chain $S=\sum_{f \in \F}b_ff$ of $\T$, we say that $S$ is \textit{internal} if it does not contain any boundary oriented face; namely, if $b_f=0$ for every $f \in \F_\partial$. The reader observes that, if $\T$ is the first barycentric subdivision of some triangulation of $\overline{\Omega}$, then $E_\partial^\sangle=\emptyset$ and hence every $1$-chain of $\T$ is corner-free. On the other hand, there are examples in which $E_\partial^\sangle \neq \emptyset$: if $\overline{\Omega}$ is a tetrahedron of $\R^3$ equipped with its natural triangulation $\T$, then $E_\partial^\sangle=E_\partial \neq \emptyset$.

We conclude this subsection by introducing the notions of homological Seifert surface and of internal homological Seifert surface. 

\begin{defin}
Given a $1$-boundary $\gamma$ of $\T$, we say that a $2$-chain $S$ of $\T$ is a \emph{homological Seifert surface of $\gamma$ in $\T$} if $\partial_2 S=\gamma$. If, in addition, $S$ is internal, then we call $S$ \emph{internal homological Seifert surface of $\gamma$ in $\T$}.
\end{defin}


\subsection{Complete dual graph, coils and plugs} \label{subsec:cdg-coils}

We begin by describing part of the closed block dual  barycentric complex of $\T$ (see \cite[Section 64]{Mun84} for the general definition).

Denote by $B:V \cup E \cup F \cup K \lra \R^3$ the barycenter map: if $\vv \in V$, $\ell=\{\vv,\mb{w}\} \in E$, $g=\{\vv,\mb{w},\mb{y}\} \in F$ and $t=\{\vv,\mb{w},\mb{y},\mb{z}\} \in K$, then we have $B(\vv)=\vv$, $B(\ell)=(\vv+\mb{w})/2$, $B(g)=(\vv+\mb{w}+\mb{y})/3$ and $B(t)=(\vv+\mb{w}+\mb{y}+\mb{z})/4$.
Extend $B$ to the oriented edges in $\E$ and to the oriented faces in $\F$ in the natural way: if $e=[\vv,\mb{w}] \in \E$ and $f=[\vv,\mb{w},\mb{y}] \in \F$, then we set $B(e):=(\vv+\mb{w})/2$ and $B(f):=(\vv+\mb{w}+\mb{y})/3$.

Let us recall the definitions of dual vertices, of dual edges and of dual faces of $\T$. We equip the dual edges and the dual faces with the natural orientation induced by the right hand rule.
\begin{itemize}
 \item For every tetrahedron $t \in K$, the dual vertex $D(t)$ of $\T$ associated with $t$ is defined as the barycenter of $t$:
 \[
 D(t):=B(t).
 \]
We denote by $V'$ the set $\{D(t) \in \R^3 \, | \, t \in K\}$ of all dual vertices of $\T$.
 \item For every oriented face $f=[\vv,\mb{w},\mb{y}] \in \F$, the oriented dual edge $D(f)$ of $\T$ associated with $f$ is the element of $C_1(\R^3;\Z)$ defined as follows: if $K(f)$ denotes the set $\big\{t \in K \, \big| \, \{\vv,\mb{w},\mb{y}\} \subset t\big\}$; namely, the set of tetrahedra of $\T$ incident on $f$, we set
\[
 D(f):=\sum_{t \in K(f)} \sign \big(\bs{\nu}(f) \cdot \bs{\tau}([B(f),B(t)]) \big) \, [B(f),B(t)],
\]
where $\sign:\R \setminus \{0\} \lra \{-1,1\}$ denotes the function given by $\sign(s):=-1$ if $s<0$ and $\sign(s):=1$ otherwise.

$D(f)$ can be described as follows. If the (oriented) face $f$ is internal, then $f$ is the common face of two tetrahedra $t_1$ and $t_2$ of $\T$, and the support of $D(f)$ is the union of the segment joining $B(f)$ with $B(t_1)$ and of the segment joining $B(f)$ and $B(t_2)$, see Figure~\ref{fig:dual_edge} (on the left). If $f$ is a boundary face, then $f$ is face of just one tetrahedron $t$, and the support of $D(f)$ is the segment joining $B(f)$ with $B(t)$, see Figure~\ref{fig:dual_edge} (on the right). In both cases, $D(f)$ is endowed with the orientation induced by $f$ via the right hand rule.

\begin{figure}[!htb]
\centering
  \includegraphics[width=.85\textwidth]{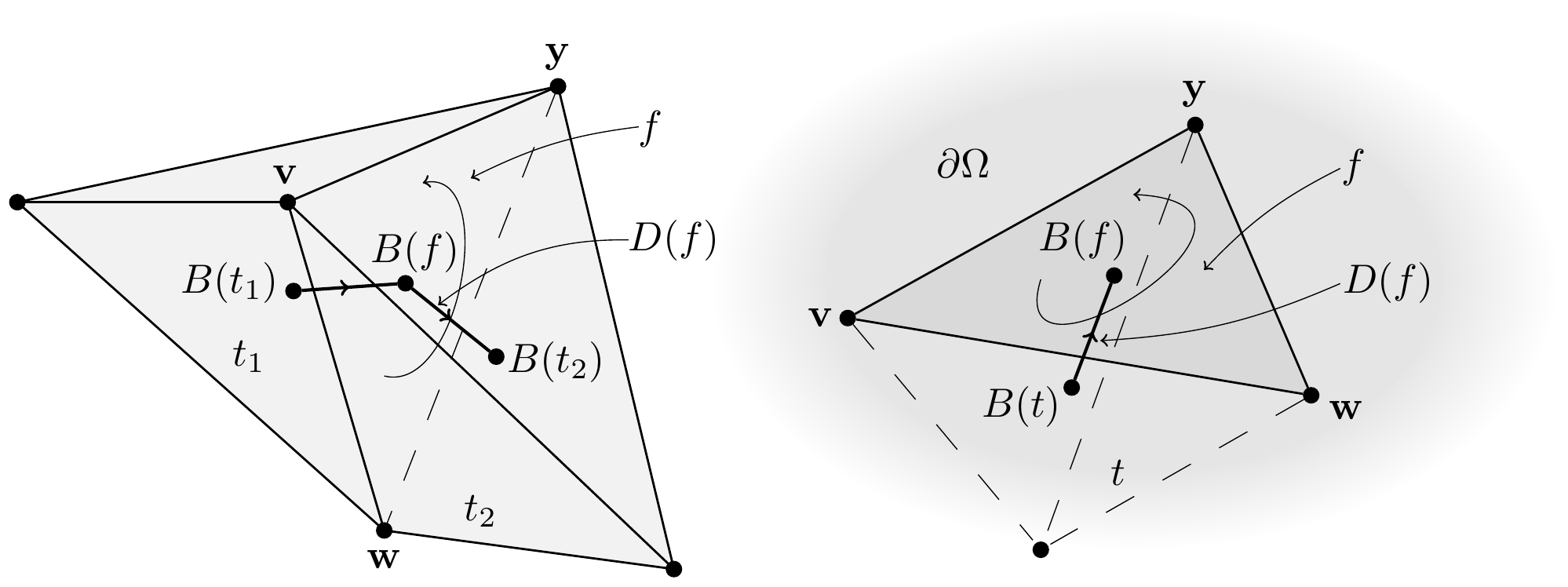}
  \caption{The dual edge $D(f)$ in the case of an internal face (on the left) and in the case of a boundary face (on the right).} \label{fig:dual_edge}
\end{figure}

We denote by $\E'$ the set $\{D(f) \in C_1(\R^3;\Z) \, | \, f \in \F\}$ of all oriented dual edges of $\T$. Moreover, we call (non-oriented) dual edge of $\T$ a $2$-subset $\{v',w'\}$ of $\R^3$ such that $\{v',w'\}=|\partial_1e'|$ for some $e' \in \E'$. We indicate by $E'$ the set of all (non-oriented) dual edges of $\T$.
 \item For every oriented edge $e=[\vv,\mb{w}] \in \E$, the oriented dual face $D(e)$ of $\T$ associated with $e$ is the element of $C_2(\R^3;\Z)$ defined as follows: if $F(e)$ denotes the set $\big\{f \in F \, \big| \, \{\vv,\mb{w}\} \subset f\big\}$; namely, the set of oriented faces of $\T$ incident on $e$, then we set
\[
D(e):=\sum_{f \in F(e)} \sum_{t \in K(f)} \sign \big( \bs{\tau}(e) \cdot \bs{\nu}([B(e),B(f),B(t)]) \big) \,   [B(e),B(f),B(t)],
\]
see Figure~\ref{fig:dual_face}. The reader observes that the support of $D(e)$ is the union of triangles of $\R^3$ obtained as the convex hull of the sets $\{B(e)\} \cup |D(f)|$, where $f$ varies in $F(e)$. Such triangles are oriented by $e$ via the right hand rule.

\begin{figure}[!htb]
\centering
  \includegraphics[width=.85\textwidth]{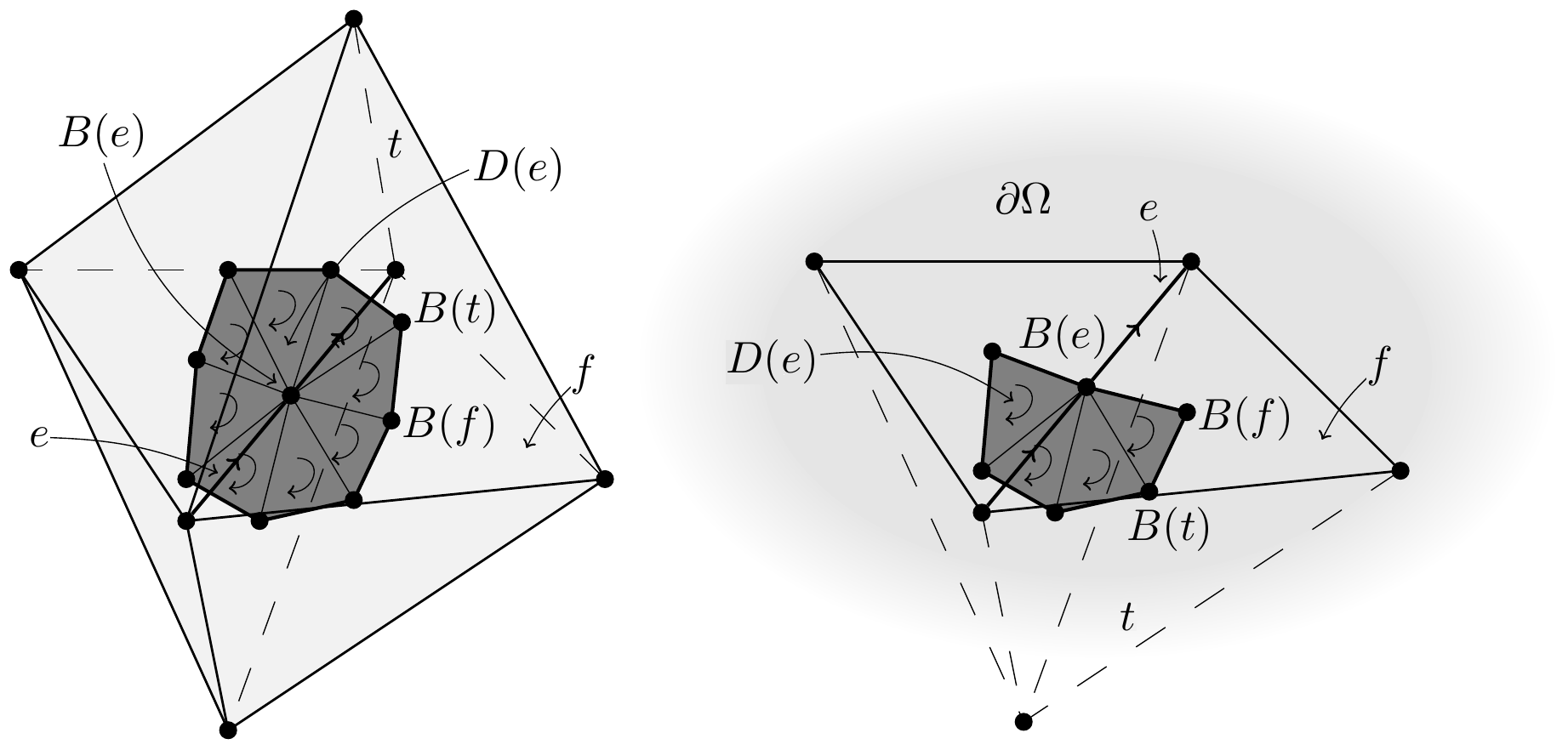}
  \caption{The dual face $D(e)$ in the case of an internal edge (on the left) and in the case of a boundary edge (on the right).} \label{fig:dual_face}
\end{figure}

We denote by $\F'$ the set $\{D(e) \in C_2(\R^3;\Z) \, | \, e \in \E\}$ of all oriented dual faces of $\T$.
\end{itemize}

The preceding three definitions determine the bijection $D:K \cup \F \cup \E \lra V' \cup \E' \cup \F'$ such that $D(K)=V'$, $D(\F)=\E'$ and $D(\E)=\F'$.

We need also to describe part of the closed block dual barycentric complex of the triangulation $\T_\partial$ of $\partial\Omega$ induced by $\T$. Recall that  $V_\partial$, $\E_\partial$ and $\F_\partial$ denote the sets of vertices, of oriented edges and of oriented faces of $\T_\partial$, respectively.

Let us define the dual vertices and the oriented dual edges of $\T_\partial$.

\begin{itemize}
 \item For every oriented face $f \in \F_\partial$, the dual vertex $D_\partial(f)$ of $\T_\partial$ associated with $f$ is defined as the barycenter of $f$:
 \[
 D_\partial(f):=B(f).
 \]
We denote by $V'_\partial$ the set $\{D_\partial(f) \in \R^3 \, | \, f \in \F_\partial\}$ of all dual vertices of $\T_\partial$.
 \item For every oriented edge $e \in \E_\partial$, the oriented dual edge $D_\partial(e)$ of $\T_\partial$ associa\-ted with $e$ is the element of $C_1(\R^3;\Z)$ defined as follows. Let $f_1$ and $f_2$ be the oriented faces in $\F_\partial$ incident on $e$, and let $\mb{n}(f_1)$ and $\mb{n}(f_2)$ be the outward unit normals of $\partial\Omega$ at $B(f_1)$ and at $B(f_2)$, respectively. Then we set
\[
D_\partial(e):=\sum_{i=1}^2 \sign\big(\bs{\tau}(e) \cdot (\mb{n}(f_i) \times \bs{\tau}([B(e),B(f_i)])) \big)[B(e),B(f_i)].
\]
$D_\partial(e)$ can be described as follows. By interchanging $f_1$ with $f_2$ if necessary, we can suppose that $f_1$ is on the left of $e$ and $f_2$ on the right of $e$ with respect to the orientation of $\partial\Omega$ induced by its outward unit vector field. Then we have:
\[
D_\partial(e)=[B(f_1),B(e)]+[B(e),B(f_2)],
\]
see Figure~\ref{fig:boundary_dual_edge}.
\begin{figure}[!htb]
\centering
  \includegraphics[width=.65\textwidth]{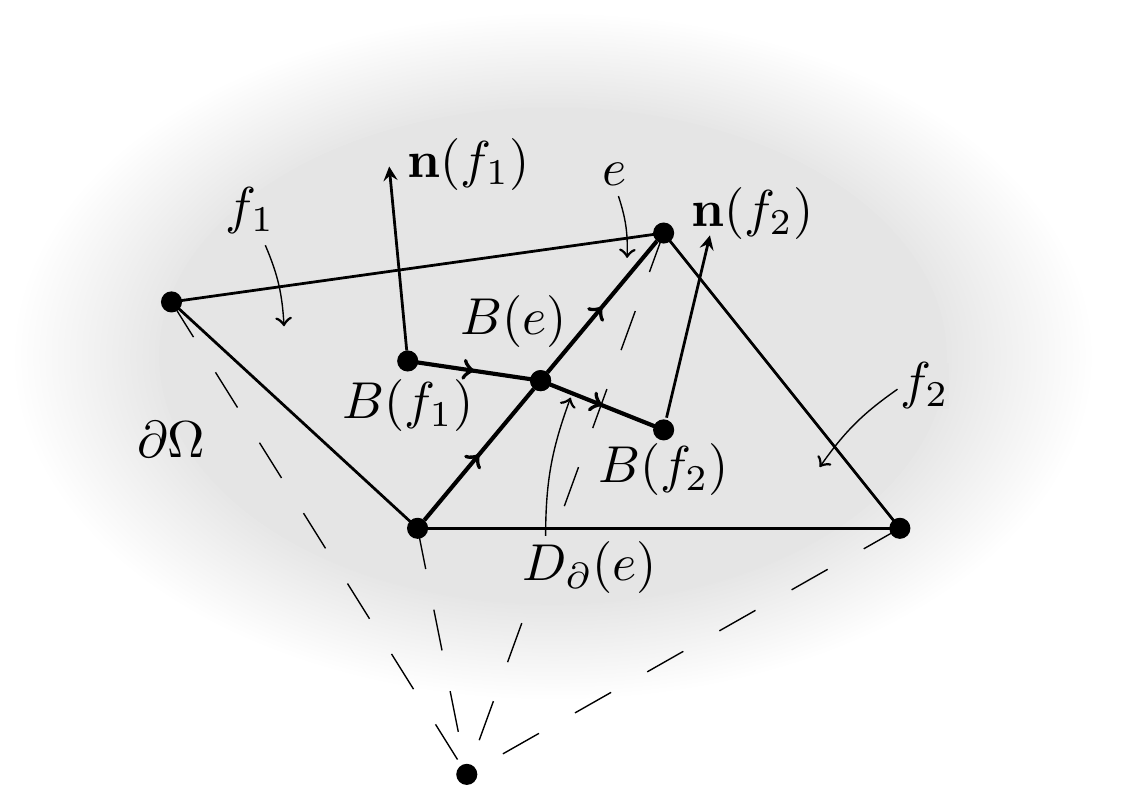}
  \caption{The boundary dual edge $D_\partial(e)$.} \label{fig:boundary_dual_edge}
\end{figure}

We denote by $\E'_\partial$ the set $\{D_\partial(e) \in C_1(\R^3;\Z) \, | \, e \in \E_\partial\}$; namely, the set of all oriented dual edges of $\T_\partial$. Moreover, we call (non-oriented) dual edge of $\T_\partial$ a $2$-subset $\{\mb{v}',\mb{w}'\}$ of $V'_\partial$ such that $\{\mb{v}',\mb{w}'\}=|\partial_1 e'|$ for some $e' \in \E'_\partial$. We indicate by $E'_\partial$ the set of all (non-oriented) dual edges of $\T_\partial$.
\end{itemize}

Let us give three definitions, which will prove to be useful later.

\begin{defin} \label{def:cdg}
We call $\A':=(V' \cup V'_\partial,E' \cup E'_\partial)$ \emph{complete dual graph of $\T$}.
A \emph{$1$-chain of $\A'$} is a formal linear combination of oriented dual edges in $\E' \cup \E'_\partial$ with integer coefficients. A $1$-chain $\gamma$ of $\A'$ is called \emph{$1$-cycle of $\A'$} if $\partial_1\gamma=0$. We denote by $C_1(\A';\Z)$ the abelian subgroup of $C_1(\R^3;\Z)$ consisting of all $1$-chains of $\A'$, and by  $Z_1(\A';\Z)$ the abelian subgroup of $Z_1(\R^3;\Z)$ consisting of all $1$-cycles of $\A'$.
\end{defin}

\begin{defin} \label{def:coil}
For every $e \in \E$, we define the \emph{coil of $e$ (in $\T$)}, denoted by $\coil(e)$, as the $1$-cycle of $\A'$ given by
\[
\coil(e):=\partial_2 D(e).
\]
\end{defin}

The reader observes that, for every $e \in \E_\partial$, $\coil(e)-D_\partial(e)$ is a $1$-chain of $\A'$, whose expression as a formal linear combination contains only oriented edges in $\E'$; namely, $\coil(e)-D_\partial(e)=\sum_{e' \in \E' \cup \E'_\partial}a_{e'}e'$ for some (unique) integer $a_{e'}$ such that $a_{e'}=0$ for every $e' \in \E'_\partial$.

Let us introduce the notion of plug of $\T$.

Given a dual edge $e' \in E'$, we say that $e'$ is a \emph{plug of $\T$} if there exists a face $f \in F_\partial$ such that $e'=\{B(f),B(t)\}$, where $t$ is the unique tetrahedron in $\T$ containing $f$. Such a plug $e'$ is said to be \emph{induced by $f$}. The plug $e'$ is called \emph{corner plug of $\T$} if it is induced by a corner face $f \in F_\partial^\sangle$, see Figure~\ref{fig:plug} (on the right). On the contrary, if the face inducing $e'$ belongs to $F_\partial \setminus F_\partial^\sangle$, then $e'$ is called \emph{regular plug of $\T$}, see Figure~\ref{fig:plug} (on the left).
Let $\mr{J}_{\T}$ be the set of all plugs of $\T$, and let $\mr{J}_{\T}^\sangle$ and $\mr{J}_{\T}^r$ be the subsets of $\mr{J}_{\T}$ consisting of corner plugs and of regular plugs of $\T$, respectively.

\begin{figure}[!htb]
\centering
  \includegraphics[width=\textwidth]{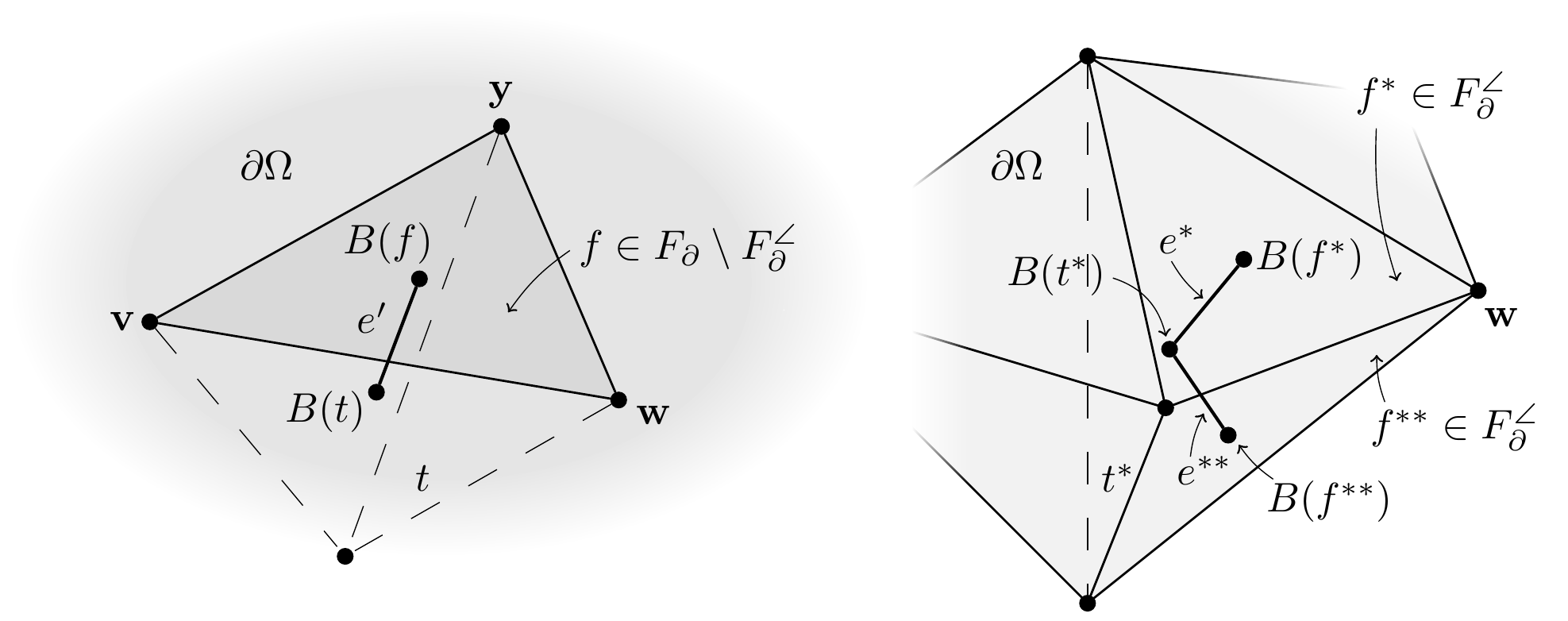}
  \caption{A regular plug $e'$ (on the left) and two corner plugs $e^*$ and $e^{**}$ (on the right).} \label{fig:plug}
\end{figure}

\begin{defin} \label{def:plug-set}
Given a subset $J$ of $\mr{J}_{\T}$, we say that $J$ is a \emph{plug-set of $\T$} if, for every $e',e'' \in J$ with $e' \neq e''$, $e'$ and $e''$ do not have any vertex in common; namely, $e' \cap e''=\emptyset$. Moreover, we say that such a plug-set $J$ is \emph{maximal} if it does not exist any plug-set of $\T$, which strictly contains $J$.
\end{defin}

\begin{remark} \label{maximal-plug-set}
Notice that a regular plug does not intersect any other plug so if $E_\partial^\sangle=\emptyset$ (or, equivalently, if $K_\partial^\sangle=\emptyset$), then all the plugs of $\T$ are regular and hence the set $\mr{J}_{\T}$ itself is the unique maximal plug-set of $\T$. Suppose $E_\partial^\sangle \neq \emptyset$. In this case, a subset $J$ of $\mr{J}_{\T}$ is a maximal plug-set of $\T$ if and only if it can be costructed as follows. For every $t \in K_\partial^\sangle$, choose one of the corner faces of $\T$ contained in $t$ and denote it by $f_t^\sangle$. Define $F^\sangle:=\{f_t^\sangle \in F_\partial^\sangle \, | \, t \in K_\partial^\sangle\}$ and indicate by $J'$ the set of corner plugs of $\T$ induced by the corner faces in $F^\sangle$. Then $J=\mr{J}_{\T}^r \cup J'$.
\end{remark}


\subsection{Linking number, recognition of 1-boundaries  and retractions} \label{subsec:lk-recogn-retract}

\noindent \textbf{Linking number.} We begin by recalling the notion of linking number. Consider two $1$-cycles $\gamma$ and $\eta$ of $\R^3$ with disjoint supports; namely, $|\gamma| \cap |\eta|=\emptyset$. A possible geometric way to define the linking number $\lk(\gamma,\eta)$ between $\gamma$ and $\eta$ is as follows.

Choose a homological Seifert surface $S_\eta=\sum_{q=1}^kb_qf_q$ of $\eta$ in $\R^3$. It is well-known (and easy to see) that there exists a $1$-cycle $\widehat{\gamma}=\sum_{p=1}^h\widehat{a}_p\widehat{e}_p$ homologous to $\gamma$ in $\R^3 \setminus |\eta|$ (and ``arbitrarily close to $\gamma$'' if necessary), which is transverse to $S_\eta$ in the following sense: for every $p \in \{1,\ldots,h\}$ and for every $q \in \{1,\ldots,k\}$, the intersection $|\widehat{e}_p| \cap |f_q|$ is either empty or consists of a single point, which does not belong to $|\partial_1 \widehat{e}_p| \cup |\partial_2 f_q|$.

For every $p \in \{1,\ldots,h\}$ and for every $q \in \{1,\ldots,k\}$, define $L_{pq}:=0$ if $|\widehat{e}_p| \cap |f_q|=\emptyset$ and $L_{pq}:=\sign(\bs{\tau}(\widehat{e}_p) \cdot \bs{\nu}(f_q))$ otherwise. The linking number $\lk(\gamma,\eta)$ between $\gamma$ and $\eta$ is the integer defined as follows:
\begin{equation} \label{eq:def-lk}
\lk(\gamma,\eta):=\sum_{p=1}^h\sum_{q=1}^k\widehat{a}_pb_qL_{pq}.
\end{equation}
This definition is well-posed: it depends only on $\gamma$ and $\eta$, not on the choice of $S_\eta$ and of $\widehat{\gamma}$. The reader observes that the preceding construction fully justifies the usual heuristic description of the linking number between $\gamma$ and $\eta$ as the number of times that $\gamma$ winds around~$\eta$.

The linking number has some remarkable properties. It is ``symmetric'' and ``bilinear'':
\[
\lk(\gamma,\eta)=\lk(\eta,\gamma),
\]
\[
\lk(a\gamma,\eta)=a \, \lk(\gamma,\eta) \; \text{ for every }a \in \Z
\]
and,  if $\gamma^* \in Z_1(\R^3;\Z)$ with $|\gamma^*| \cap |\eta|=\emptyset$,
\[
\lk(\gamma+\gamma^*,\eta)=\lk(\gamma,\eta)+\lk(\gamma^*,\eta) \, .
\]

The linking number is a homological invariant in the following sense: if a $1$-cycle $\gamma^*$ of $\R^3$ is homologous to $\gamma$ in $\R^3 \setminus |\eta|$, then
\begin{equation} \label{eq:homol-inv}
\lk(\gamma,\eta)=\lk(\gamma^*,\eta).
\end{equation}
In particular, we have:
\begin{equation} \label{eq:bounds}
\text{$\lk(\gamma,\eta)=0$ if $\gamma$ bounds in $\R^3 \setminus |\eta|$.}
\end{equation}

The linking number can be computed via an integral formula. Write $\gamma$ and $\eta$ explicitly:
$\gamma=\sum_{i=1}^na_ie_i$ and $\eta=\sum_{j=1}^mc_jg_j$ for some integer $a_i,c_j$ and for some oriented segment $e_i=[{\bf a}_i, {\bf b}_i]$ and $g_j=[\mb{c}_j,\mb{d}_j]$ of $\R^3$. The following Gauss formula holds:
\begin{equation} \label{eq:lk-gauss}
\lk(\gamma,\eta)=\frac{1}{4\pi}\sum_{i=1}^n\sum_{j=1}^m a_i c_j \left(\int_0^1\int_0^1 \frac{e_i(r)-g_j(s)}{|e_i(r)-g_j(s)|^3} \times \vec{e}_i\right) \cdot \vec{g}_j \, dr \, ds,
\end{equation}
where $\vec{e}_i:=\mb{b}_i-\mb{a}_i$, $\vec{g}_j:=\mb{d}_j-\mb{c}_j$ and  $e_i(r):=\mb{a}_i+r\vec{e}_i$, $g_j(s):=\mb{c}_j+s\vec{g}_j$ if $r,s \in [0,1]$. We refer the reader to \cite{BG12} for a fast algorithm to compute $\lk(\gamma,\eta)$ accurately, by means of an explicit expression of the preceding integral.

\vspace{.5em}

\noindent \textbf{Recognition of 1-boundaries.} The linking number can be used to recognize $1$-boundaries of $\T$ among $1$-cycles of $\T$. This is possible by the Alexander duality theorem. Indeed, such a theorem ensures that $H_1(\R^3 \setminus \overline{\Omega};\Z)$ is isomorphic to $H_1(\overline{\Omega};\Z)$, and hence to $\Z^g$ if $g$ is the first Betti number of $\overline{\Omega}$. Furthermore, if $\sigma^*_1,\ldots,\sigma^*_g$ are $1$-cycles of $\R^3$ with support in $\R^3 \setminus \overline{\Omega}$ whose homology classes in $\R^3 \setminus \overline{\Omega}$ form a basis of $H_1(\R^3 \setminus \overline{\Omega};\Z)$, then it holds:
\begin{equation*} 
\text{a $1$-cycle $\sigma$ of $\T$ is a $1$-boundary of $\T$ if and only if $\lk(\sigma,\sigma^*_i)=0$ for every $i \in \{1,\ldots,g\}$.}
\end{equation*}

\vspace{.5em}

\noindent \textbf{Retractions.} Now we define the ``retractions'' $R_+:Z_1(\T;\Z) \lra Z_1(\R^3;\Z)$ and $R_-:Z_1(\A';\Z) \lra Z_1(\R^3;\Z)$, and we prove an useful invariance property of certain linking numbers with respect to the application of such ``retractions''.

Let us define $R_+$. For every oriented edge $e=[\vv,\mb{w}]$ in $\E_\partial$, choose a tetrahedron $t_e \in K$ incident on $e$ (namely, $\{\vv,\mb{w}\} \subset t_e$), denote by $\mb{d}_e$ the barycenter of the triangle of $\R^3$ of vertices $\vv$, $\mb{w}$, $B(t_e)$, and define the $1$-chain $r_+(e)$ of $\R^3$ and the oriented triangle $S_e$ of $\R^3$ by setting
\[
r_+(e):=[\vv,\mb{d}_e]+[\mb{d}_e,\mb{w}]
\quad \mbox{and} \quad
S_e:=[\vv,\mb{d}_e,\mb{w}].
\]
The reader observes that $\partial_2S_e=r_+(e)-e$, see Figure~\ref{fig:rpiu}.
\begin{figure}[!htb]
\centering
 \includegraphics[width=.60\textwidth]{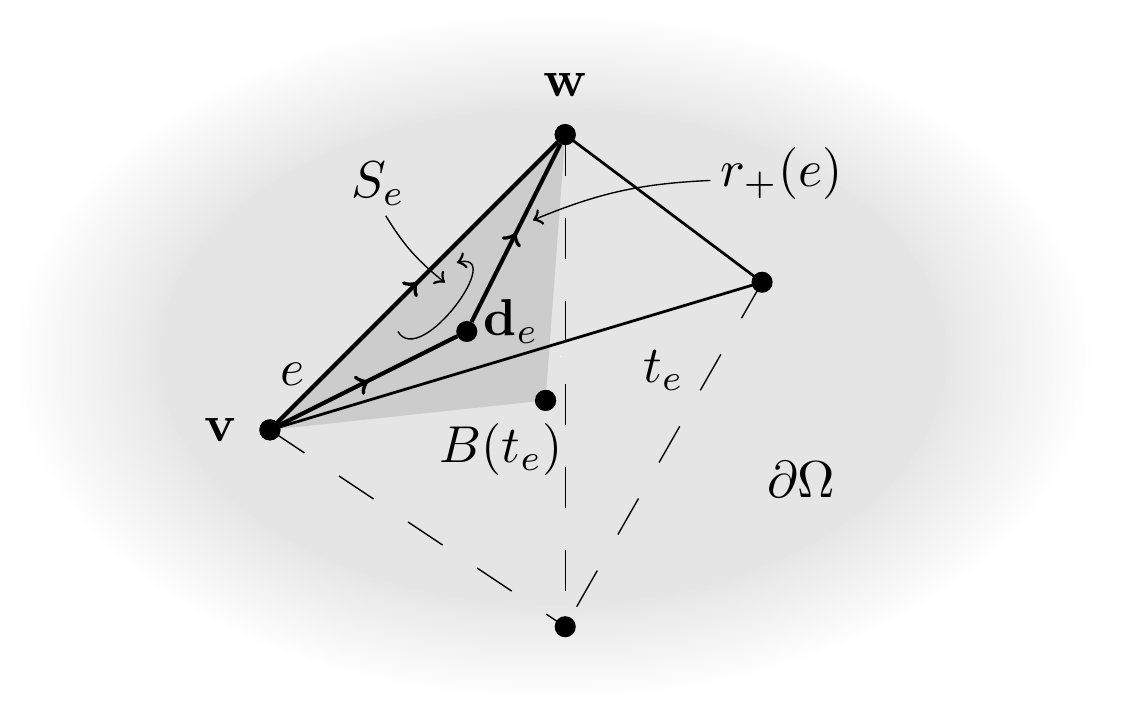}
  \caption{The $1$-chain $r_+(e)$ and the oriented triangle $S_e$.} \label{fig:rpiu}
\end{figure}

Given $\xi=\sum_{e \in \E}\alpha_ee \in Z_1(\T;\Z)$, we define:
\[
R_+(\xi):=\sum_{e \in \E \setminus \E_\partial} \alpha_e e + \sum_{e \in \E_\partial} \alpha_e r_+(e).
\]
Evidently, $R_+(\xi)$ belongs to $Z_1(\R^3;\Z)$ and $R_+(\xi)-\xi$ is a $1$-boundary of $\R^3$:
\begin{equation} \label{eq:R+}
\textstyle
R_+(\xi)-\xi=\partial_2\left(\sum_{e \in \E_\partial} \alpha_e S_e\right).
\end{equation}

Now we introduce $R_-$. First, we recall that, since $\partial\Omega$ is assumed to be locally flat, we know that it has a collar in $\R^3 \setminus \Omega$; namely, there exist an open neighborhood $U$ of $\partial\Omega$ in $\R^3 \setminus \Omega$ and a homeomorphism $\psi: \partial\Omega \times [0,1)  \lra U$, called collar of $\partial\Omega$ in $\R^3 \setminus \Omega$, such that $\psi(x,0)=x$ for every $x \in \partial\Omega$.

Let $e' \in \E'_\partial$. By definition of $\E'_\partial$, there exist, and are unique, $e \in \E_\partial$ and $f_1,f_2 \in \F_\partial$ such that $e'=D_\partial(e)=[B(f_1),B(e)]+[B(e),B(f_2)]$.
Thanks to the existence of a collar of $\partial\Omega$ in $\R^3 \setminus \Omega$, one can choose a point $\mb{x}_{e'} \in \R^3 \setminus \overline{\Omega}$ arbitrarily close to $B(e)$ with the following property: if $S'_{e'}$ is the $2$-chain of $\R^3$ defined by setting
\begin{equation} \label{eq:S'}
S'_{e'}:=[B(f_1),\mb{x}_{e'},B(e)]+[B(e),\mb{x}_{e'},B(f_2)],
\end{equation}
then $\overline{\Omega} \cap |S'_{e'}|=|e'|$. Denote by $r_-(e')$ the $1$-chain $[B(f_1),\mb{x}_{e'}]+[\mb{x}_{e'},B(f_2)]$ of $\R^3$, see Figure~\ref{fig:external}. Observe that $\partial_2S'_{e'}=r_-(e')-e'$.

\begin{figure}[!htb]
\centering
  \includegraphics[width=.65\textwidth]{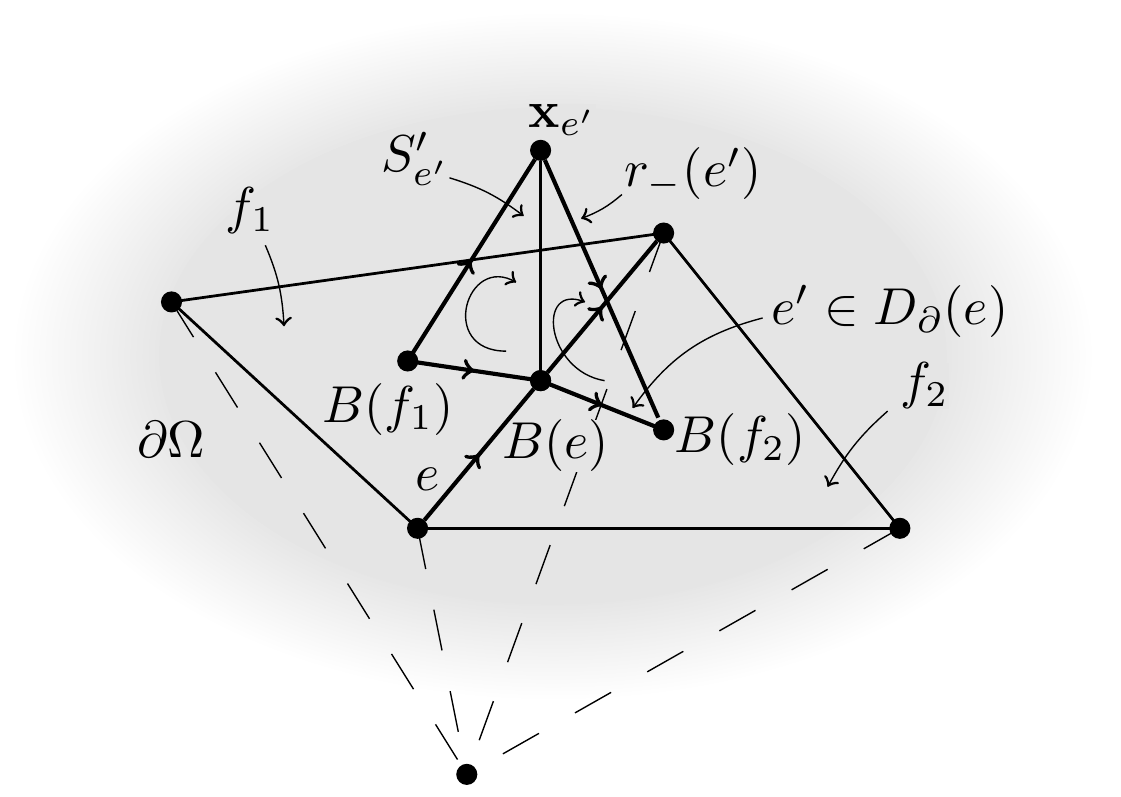}
  \caption{The $1$-chain $r_-(e')$ and the $2$-chain $S'_{e'}$.} \label{fig:external}
\end{figure}

For every $\xi'=\sum_{e' \in \E' \cup \E'_\partial} \alpha'_{e'}e' \in Z_1(\A';\Z)$, we define:
\begin{equation} \label{eq:R_-}
R_-(\xi'):=\sum_{e' \in \E'}\alpha'_{e'}e'+\sum_{e' \in \E'_\partial}\alpha'_{e'}r_-(e').
\end{equation}
We remark that $R_-(\xi')$ is a $1$-cycle of $\R^3$ and $R_-(\xi')-\xi'$ is a $1$-boundary of $\R^3$:
\begin{equation} \label{eq:R-}
\textstyle
R_-(\xi')-\xi'=\partial_2\left(\sum_{e' \in \E'_\partial}\alpha'_{e'}S'_{e'}\right).
\end{equation}

The following result holds true.

\begin{lemma} \label{lem:C+C-}
For every $\xi \in Z_1(\T;\Z)$ and for every $\xi' \in Z_1(\A';\Z)$, it holds:
\[
\lk\big(R_+(\xi),\xi'\big)=\lk\big(\xi,R_-(\xi')\big).
\]
\end{lemma}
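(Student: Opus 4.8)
The plan is to pass through the common quantity $\lk(R_+(\xi),R_-(\xi'))$, using that each retraction alters a cycle only by a boundary and that $\lk$ is a homological invariant and is symmetric. Set $T_+:=\sum_{e\in\E_\partial}\alpha_e S_e$ and $T_-:=\sum_{e'\in\E'_\partial}\alpha'_{e'}S'_{e'}$, so that by \eqref{eq:R+} and \eqref{eq:R-} we have $R_+(\xi)-\xi=\partial_2 T_+$ and $R_-(\xi')-\xi'=\partial_2 T_-$. I would then establish the chain of equalities
\[
\lk\big(R_+(\xi),\xi'\big)=\lk\big(R_+(\xi),R_-(\xi')\big)=\lk\big(\xi,R_-(\xi')\big).
\]
For the first equality, $\xi'$ and $R_-(\xi')$ differ by $\partial_2 T_-$, hence are homologous in $\R^3\setminus|R_+(\xi)|$ as soon as $|T_-|\cap|R_+(\xi)|=\emptyset$, and \eqref{eq:homol-inv} then yields it. For the second, I would use the symmetry of $\lk$ to read both sides as $\lk(R_-(\xi'),\cdot)$ and note that $\xi$ and $R_+(\xi)$ differ by $\partial_2 T_+$, so they are homologous in $\R^3\setminus|R_-(\xi')|$ provided $|T_+|\cap|R_-(\xi')|=\emptyset$, whence \eqref{eq:homol-inv} applies again. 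Thus the whole statement reduces to the two disjointness facts $|T_-|\cap|R_+(\xi)|=\emptyset$ and $|T_+|\cap|R_-(\xi')|=\emptyset$, together with the checks that the three linking numbers in the display have disjoint supports.

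These follow from a coarse description of the four chains involved. The chain $R_+(\xi)$ lies in $\overline\Omega$ and meets $\partial\Omega$ only at vertices of $\T$: its interior primal edges touch $\partial\Omega$ only at endpoints, while each retraction arc $r_+(e)$ lies in the triangle $S_e\subset t_e$ and meets $\partial\Omega$ only along $e$. Dually, $|R_-(\xi')|$ consists of the dual edges of $\E'$, which lie in $\overline\Omega$, together with the external arcs $r_-(e')$, which lie in $\R^3\setminus\overline\Omega$ except for their endpoints $B(f_1),B(f_2)$ on $\partial\Omega$. From $\overline\Omega\cap|S'_{e'}|=|e'|$ one gets $\overline\Omega\cap|T_-|\subset\bigcup_{e'}|e'|\subset\partial\Omega$, while $|T_+|\subset\overline\Omega$ meets $\partial\Omega$ exactly along boundary edges. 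Granting this, $|T_-|\cap|R_+(\xi)|\subset(\bigcup_{e'}|e'|)\cap V_\partial=\emptyset$, since the dual boundary edges avoid the vertices of $\T$; and the external part of $R_-(\xi')$ is disjoint from $|T_+|$ because its only candidate intersection points, the face barycenters $B(f_i)$, do not lie on boundary edges. The same bookkeeping gives well-definedness of the three linking numbers.

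The one step that is not mere bookkeeping, and which I expect to be the main obstacle, is the interior comparison between the retraction triangles and the dual $1$-skeleton: I must show each $S_e$ is disjoint from the support of every dual edge in $\E'$, so that the interior part $\sum_{\E'}\alpha'_{e'}e'$ of $R_-(\xi')$ misses $T_+$, and likewise that $R_+(\xi)$ misses the dual $1$-skeleton (which also yields $|R_+(\xi)|\cap|\xi'|=\emptyset$). Since $|S_e|\subset|t_e|$ and the dual $1$-skeleton meets $t_e$ only in the segments joining $B(t_e)$ to the barycenters of the four faces of $t_e$, this is a local statement inside one tetrahedron, which I would settle by a direct computation in barycentric coordinates on $t_e$. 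Writing $e=\{\vv,\mb{w}\}$, the points of $S_e$ are exactly those whose two coordinates opposite to $e$ are equal and at most $\tfrac1{12}$, whereas along each of the four segments $[B(t_e),B(g)]$ those two coordinates are either unequal or bounded below by $\tfrac14$; hence the two sets never meet, and the same estimate applied to $r_+(e)\subset|S_e|$ completes the reduction.
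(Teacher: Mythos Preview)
Your proposal is correct and follows exactly the paper's route: pass through the common value $\lk(R_+(\xi),R_-(\xi'))$ using the two disjointness facts $|R_+(\xi)|\cap|T_-|=\emptyset$ and $|R_-(\xi')|\cap|T_+|=\emptyset$ (the paper's \eqref{eq:+} and \eqref{eq:-}) together with \eqref{eq:homol-inv}. The paper simply asserts these disjointness statements as observations, whereas you supply detailed justification, including the barycentric check that $S_e$ avoids the dual $1$-skeleton inside $t_e$; this is additional care rather than a different argument.
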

\begin{proof}
First, observe that $|R_+(\xi)| \cap |\xi'|= \emptyset$, $|\xi| \cap |R_-(\xi')|=\emptyset$ and hence the linking numbers $\lk(R_+(\xi),\xi')$ and $\lk(\xi,R_-(\xi'))$ are defined. Moreover, it holds:
\begin{equation} \label{eq:+}
|R_+(\xi)| \cap \bigcup_{e' \in \E'_\partial}|S'_{e'}|=\emptyset
\end{equation}
and
\begin{equation} \label{eq:-}
|R_-(\xi')| \cap \bigcup_{e \in \E_\partial}|S_e|=\emptyset.
\end{equation}

By combining points \eqref{eq:R-} and \eqref{eq:-}, we obtain that $\xi'$ and $R_-(\xi')$ are homologous in $\R^3 \setminus |R_+(\xi)|$. Thanks to \eqref{eq:homol-inv}, we infer that $\lk(R_+(\xi),\xi')=\lk(R_+(\xi),R_-(\xi'))$. Similarly, points \eqref{eq:R+}, \eqref{eq:+} and \eqref{eq:homol-inv} ensure that $\lk(\xi,R_-(\xi'))=\lk(R_+(\xi),R_-(\xi'))$. It follows that $\lk(R_+(\xi),\xi')=\lk(\xi,R_-(\xi'))$, as desired.
\end{proof}

\begin{remark}
We have introduced the rectraction $R_-$ in order to simplify the proof of some results. However, it will be never used in the construction of the homological Seifert surfaces presented below.
\end{remark}


\section{The main results} \label{sec:results}


\subsection{The statements}

Consider the complete dual graph $\A'=(V' \cup V'_\partial, E' \cup E'_\partial)$ of $\T$.
Choose a spanning tree $\B'=(V' \cup V'_\partial,N')$ of $\A'$ and denote by $\mc{N}'$ the set of oriented dual edges in $\E' \cup \E'_{\partial}$ corresponding to $N'$; namely, we set
\[
\mc{N}':=\big\{e' \in \E' \cup \E'_{\partial} \, \big| \, |\partial_1e'| \in N'\big\}.
\]
We call $\mc{N}'$ \textit{set of oriented dual edges of $\B'$}.

Fix a dual vertex $\mb{a}' \in V' \cup V'_\partial$, we will consider as a root of $\B'$. Let us give the rigorous definition of ``(unique) $1$-chain $C'_{\mb{v}'}$ of $\B'$ from the root $\mb{a}'$ to another vertex $\mb{v}'$''. Consider a dual vertex $\mb{v}'$ in $V' \cup V'_\partial$. First, suppose $\mb{v}' \neq \mb{a}'$. Since $\B'$ is a tree, there exist, and are unique, a positive integer $m$ and an ordered sequence $(\mb{w}'_0,\mb{w}'_1,\ldots,\mb{w}'_m)$ of vertices in $V' \cup V'_\partial$ such that $\mb{w}'_0=\mb{a}'$, $\mb{w}'_m=\mb{v}'$, $\mb{w}'_i \neq \mb{w}'_j$ for every $i,j \in \{0,1,\ldots,m\}$ with $i \neq j$ and $\{\mb{w}'_{k-1},\mb{w}'_k\} \in N'$ for every $k \in \{1,\ldots,m\}$. In this way, for every $k \in \{1,\ldots,m\}$, there exist, and are unique, $e'_k \in \mc{N}'$ and $\delta_k \in \{-1,1\}$ such that $\partial_1(\delta_ke'_k)=\mb{w}'_k-\mb{w}'_{k-1}$. We can now define $C'_{\mb{v}'} \in C_1(\A';\Z)$ as follows:
\begin{equation} \label{eq:C'}
C'_{\mb{v}'}:=\sum_{k=1}^m\delta_ke'_k.
\end{equation}
Evidently, it holds: $\partial_1(C'_{\mb{v}'})=\mb{v}'-\mb{a}'$. If $\mb{v}'=\mb{a}'$, then we define $C'_{\mb{v}'}$ as the zero $1$-chain in $C_1(\A';\Z)$.

For every oriented dual edge $e' \in \E'\cup \E'_\partial$ with $\partial_1 e'=\mb{v}'-\mb{w}'$, we define the $1$-cycle $\cb(e')$ of $\A'$ by setting
\[
\cb(e'):=C'_{\mb{w}'}+e'-C'_{\mb{v}'}.
\]
The reader observes that $\cb(e')$ depends only on $\B'$ and on $e'$, and not on the choosen root $\mb{a}'$ of $\B'$. Moreover, if $e' \in \mc{N}'$, then $\cb(e')=0$.

Denote by $\Gamma_0,\Gamma_1,\ldots,\Gamma_p$ the connected components of $\partial\Omega$. For every $i \in \{0,1,\ldots,p\}$, we define $V'_{\partial,i}$ as the set of vertices in $V'_{\partial}$ belonging to $\Gamma_i$, and $E'_{\partial,i}$ as the set of dual edges $\{\mb{v}',\mb{w}'\}$ in $E'_\partial$ such that $\{\mb{v}',\mb{w}'\} \subset \Gamma_i$. Indicate by $\A'_i$ the graph $(V'_{\partial,i},E'_{\partial,i})$. It is the graph induced by $\A'$ on $\Gamma_i$.

\begin{defin} \label{def:sette}
Let $\B'=(V' \cup V'_\partial,N')$ be a spanning tree of $\A'$. We say that $\B'$ is a \emph{Seifert dual (barycentric) spanning tree of $\T$} if it restricts to a spanning tree on each connected component $\Gamma_i$ of $\partial\Omega$; more precisely, if
\begin{equation} \label{eq:seifert-dst}
\text{$(V'_{\partial,i},N' \cap E'_{\partial,i})$ is a spanning tree of $\A'_i$ for every $i \in \{0,1,\ldots,p\}$.}
\end{equation}
\end{defin}

\begin{remark} \label{rem:intro}
We pointed out in the introduction that, given a spanning tree $\B'$ of $\A'$, the number $N_{\B'}$ of oriented faces of $\T$ whose dual edge belongs to $\B'$ is $\geq \mk{t}+p$, where $\mk{t}$ is the number of tetrahedra of $\T$. Moreover, the equality holds if and only if $\B'$ is a Seifert dual spanning tree of $\T$. The following simple argument of graph theory explains why. Let $i \in \{0,1,\ldots,p\}$. Indicate by $\mk{v}'_i$ the number of vertices of $\A'_i$ or, equivalently, the number of faces of $F_\partial$ contained in $\Gamma_i$. Evidently, the number of vertices of $\A'$ is $\mk{t}+\sum_{i=0}^p\mk{v}'_i$. Denote by $\B'_i$ the graph induced by $\B'$ on $\Gamma_i$ and by $k_i$ the number of connected components of $\B'_i$. Bearing in mind that $\B'$ is a spanning tree of $\A'$, we infer at once that $\B'_i$ is a subgraph of $\A'_i$ with the same vertices of $\A'_i$, whose connected components are trees. In particular, $\B'_i$ is a spanning tree of $\A'_i$ if and only if $k_i=1$. Since in a finite tree the number of edges is equal to the number of vertices minus $1$, we have that the number of edges of $\B'$ is $(\mk{t}+\sum_{i=0}^p\mk{v}'_i)-1$ and the number of edges of $\B'_i$ is $\mk{v}'_i-k_i$. It follows that
\[
\textstyle
N_{\B'}=(\mk{t}+\sum_{i=0}^p\mk{v}'_i)-1-\sum_{i=0}^p(\mk{v}'_i-k_i)=\mk{t}-1+\sum_{i=0}^pk_i \geq \mk{t}+p
\]
and $N_{\B'}=\mk{t}+p$ if and only if each $k_i$ is equal to $1$ or, equivalently, if and only if the graph $\B'_i$ is a spanning tree of $\A'_i$ for each $i \in \{0,1,\ldots,p\}$; namely, if $\B'$ is a Seifert dual spanning tree of $\T$.

As we have just said in the introduction, we are mainly interested in Seifert dual spanning tree of $\T$ because $Z_2(\T;\Z)=\ker(\partial_2)$ is a free abelian group of rank $\mk{t}+p$. Let us explain the latter assertion. Since $H_3(\T;\Z)$ is trivial, the boundary operator $\partial_3$ is injective. It follows immediately that $B_2(\T;\Z)$ is a free abelian group of rank $\mk{t}$ and the boundaries of tetrahedra $t_1,\ldots,t_{\mk{t}}$ of $\T$ furnish one of its basis. For every $i \in \{1,\ldots,p\}$, denote by $\gamma_i$ the $2$-cycle in $Z_2(\T;\Z)$ associated with the triangulation of $\Gamma_i$ induced by $\T$. It is well known that $H_2(\T;\Z)$ is a free abelian group of rank $p$ and the homology classes of the $\gamma_i$'s form one of its basis. Bearing in mind that $Z_2(\T;\Z)$ is isomorphic to $B_2(\T;\Z) \oplus H_2(\T;\Z)$, we infer that $Z_2(\T;\Z)$ is a free abelian group of rank $\mk{t}+p$ and $\{\partial_3t_1,\ldots,\partial_3t_{\mk{t}},\gamma_1,\ldots,\gamma_p\}$ is a basis of $Z_2(\T;\Z)$.
\end{remark}

The reader observes that a Seifert dual spanning tree of $\T$ always exists and it is easy to construct. Indeed, it suffices to choose a spanning tree $\B'_i$ of each $\A'_i$ and to extend the union of the $\B'_i$'s to a spanning tree of the whole $\A'$.

Our main result reads as follows:

\begin{theo} \label{thm:main}
Let $\B'=(V' \cup V'_\partial,N')$ be a Seifert dual spanning tree of $\T$ and let $\mc{N}'$ be its set of oriented dual edges. Then, for every $1$-boundary $\gamma$ of $\T$, there exists, and is unique, a homological Seifert surface $S=\sum_{f \in \F}b_ff$ of $\gamma$ in $\T$ such that $b_f=0$ for every $f \in \F$ with $D(f) \in \mc{N}'$.
Moreover, it holds:
\begin{equation} \label{eq:formula}
b_f=\lk\big(R_+(\gamma),\cb(D(f))\big).
\end{equation}
for every $f \in \F$.
\end{theo}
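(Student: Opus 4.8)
The plan is to prove existence and uniqueness first, and then verify the explicit formula \eqref{eq:formula} separately, since these are conceptually different tasks. For existence and uniqueness, the key observation is that, by Remark~\ref{rem:intro}, the space $Z_2(\T;\Z) = \ker(\partial_2)$ is free of rank $\mk{t}+p$, and a Seifert dual spanning tree $\B'$ selects exactly $N_{\B'} = \mk{t}+p$ faces of $\T$ whose dual edges lie in $\mc{N}'$. I would set up the problem as a linear-algebraic one: fixing the coefficients $b_f=0$ for $f$ with $D(f) \in \mc{N}'$ amounts to imposing $\mk{t}+p$ linear conditions on a given homological Seifert surface $S_0$ of $\gamma$ (which exists because $\gamma$ is a $1$-boundary). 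Two homological Seifert surfaces of $\gamma$ differ by an element of $Z_2(\T;\Z)$, so the heart of the matter is to show that the restriction of the coordinate-evaluation map $z \mapsto (b_f(z))_{D(f) \in \mc{N}'}$ to $Z_2(\T;\Z)$ is an isomorphism onto $\Z^{\mk{t}+p}$. Both source and target are free of the same finite rank, so it suffices to prove this map is injective, i.e.\ that the only $2$-cycle all of whose ``tree-faces'' vanish is the zero cycle.

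I expect this injectivity to be the main obstacle, and I would attack it using the explicit basis $\{\partial_3 t_1,\ldots,\partial_3 t_{\mk{t}}, \gamma_1,\ldots,\gamma_p\}$ of $Z_2(\T;\Z)$ from Remark~\ref{rem:intro}. Writing a general cycle $z = \sum_j c_j \partial_3 t_j + \sum_i d_i \gamma_i$, I would show that the conditions $b_f(z)=0$ for $D(f) \in \mc{N}'$ force all $c_j$ and $d_i$ to vanish. This is where the Seifert property \eqref{eq:seifert-dst} is crucial: the internal dual edges in $\mc{N}'$ form a spanning tree of the dual graph of the tetrahedra (quotiented by the boundary structure), which pins down the tetrahedral coefficients $c_j$ by a standard propagation-along-a-tree argument, while the boundary dual edges in $\mc{N}' \cap E'_{\partial,i}$ forming a spanning tree of each $\A'_i$ pins down the $d_i$. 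Concretely, the dual edge $D(f)$ records the pair of tetrahedra across $f$ (or the single tetrahedron, for a boundary face), so setting $b_f=0$ along a spanning tree of dual vertices equates coefficients across tree edges; connectivity of the tree then forces constancy, and one anchors the constant using the root or the single ``external'' component $\Gamma_0$.

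For the formula \eqref{eq:formula}, I would argue homologically rather than by direct computation. The natural strategy is to show that the map $\gamma \mapsto b_f$ and the map $\gamma \mapsto \lk(R_+(\gamma), \cb(D(f)))$ agree on all $1$-boundaries. First I would note both sides are linear in $\gamma$, so it suffices to check the identity on a spanning set of $B_1(\T;\Z)$, for instance on boundaries $\partial_2 g$ of single oriented faces $g \in \F$. For such $\gamma = \partial_2 g$, the unique surface $S$ from the existence part differs from $g$ by a $2$-cycle, and writing out its coefficients reduces to understanding $b_f(g)$. The key bridge is the relation between the coil $\coil(e) = \partial_2 D(e)$ and the cycle $\cb(D(f))$, together with Lemma~\ref{lem:C+C-}, which converts $\lk(R_+(\gamma), \cb(D(f)))$ into an intersection-type pairing between $\gamma$ and the dual object $\cb(D(f))$. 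I would use the fact that $\cb(D(f))$ is, by construction, the fundamental cycle of $\B'$ associated with the non-tree dual edge $D(f)$, so its linking with $R_+(\gamma)$ counts precisely the net signed passage of $\gamma$ through the dual face structure at $f$, which is exactly the coefficient $b_f$ in the unique tree-gauge surface. The delicate point here will be matching the orientation conventions — the right-hand-rule orientations on dual edges and faces, the sign function $\sign(\bs{\tau}\cdot\bs{\nu})$ defining $D(f)$, and the boundary collar used to define $R_-$ — so that the signed intersection number coming out of the linking number coincides with the algebraic coefficient $b_f$ rather than its negative. I would isolate this as a separate orientation lemma, possibly reducing to the local model of a single internal face shared by two tetrahedra, where the identity can be checked by hand and then propagated by linearity and the tree structure.
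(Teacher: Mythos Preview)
Your existence argument contains a genuine error: over $\Z$, an injective homomorphism between free abelian groups of the same finite rank need not be surjective (multiplication by $2$ on $\Z$ is the obvious counterexample), so proving that the evaluation map $z\mapsto(b_f(z))_{D(f)\in\mc{N}'}$ from $Z_2(\T;\Z)$ to $\Z^{\mk{t}+p}$ is injective does not by itself yield existence of $S$. The fix is that your propagation argument actually proves more than injectivity: collapsing each boundary subtree $\B'_i$ to a point sends $\B'$ to a spanning tree of a graph on $\mk{t}+p+1$ vertices whose edges are precisely the tree faces, and the usual ``integrate along the tree from a root'' construction gives an explicit inverse, so the matrix is unimodular. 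Note also a confusion in your sketch: the boundary dual edges in $N'\cap E'_{\partial,i}$ do not correspond to faces of $\T$ and hence impose no equations $b_f=0$; the Seifert condition \eqref{eq:seifert-dst} enters only to guarantee $N_{\B'}=\mk{t}+p$ and that the quotient just described is again a tree, not by the $\B'_i$ directly pinning down the $d_i$.

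With that repair your route is viable but genuinely different from the paper's. The paper does not separate existence from the formula: it first shows, via Lemmas~\ref{lem:C+C-} and~\ref{lem:dual-coil}, that any solution with $b_f=0$ on tree faces must satisfy \eqref{eq:formula}, giving uniqueness and the formula at once; then it \emph{defines} $b_f$ by \eqref{eq:formula} and verifies $\partial_2 S=\gamma$ by decomposing each $\coil(e)$ as $\sum_{e'} a'_{e'}\cb(e')$ and invoking Lemma~\ref{lem:boundary} for the boundary terms. Your rank argument trades this linking-number existence verification for the basis of $Z_2(\T;\Z)$ from Remark~\ref{rem:intro}, which is more elementary; your formula check on generators $\partial_2 g$ then amounts exactly to Lemma~\ref{lem:dual-coil} after applying Lemma~\ref{lem:C+C-}, so the ``orientation lemma'' you anticipate is already available.
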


We consider also the problem of the existence and of the construction of internal homolo\-gical Seifert surfaces. To this end, we need a definition, in which we will employ the notion of maximal plug-set of $\T$ introduced in Definition \ref{def:plug-set}.

\begin{defin}
Given a spanning tree $\B'=(V' \cup V'_\partial,N')$ of $\A'$, we say that $\B'$ is a \emph{strongly-Seifert dual (barycentric) spanning tree of $\T$} if it satisfies \eqref{eq:seifert-dst} and the set $N'$ of its edges contains a maximal plug-set of $\T$.
\end{defin}

Once again, strongly-Seifert dual spanning trees of $\T$ always exist, and are easy to construct. Let $i \in \{0,1,\ldots,p\}$. Choose a spanning tree $\B'_i=(V'_{\partial,i},N'_i)$ of each $\A'_i$. Denote by $\mr{J}_{\T\!,i}^r$ the set of regular plugs of $\T$ induced by the faces $f \in F_\partial \setminus F_\partial^\sangle$ with $|f| \subset \Gamma_i$. Let $K_{\partial,i}$ be the set of tetrahedra $t \in K$ such that $t$ contains at least one face in $\Gamma_i$ and let $K_{\partial,i}^\sangle:=K_{\partial,i} \cap K_\partial^\sangle$. For every $t \in K_{\partial,i}^\sangle$, choose one of the corner faces of $\T$ contained in $t$ and denote it by $f_{t,i}^\sangle$. Let $J'_i$ be the set of corner plugs of $\T$ induced by the chosen corner faces $\{f_{t,i}^\sangle\}_{t \in K_{\partial,i}^\sangle}$, let $J''_i:=\mr{J}_{\T\!,i}^r \cup J'_i$ and let $V_i''$ be the set of dual vertices of $\T$ of the form $B(t)$ with $t \in K_{\partial,i}$; namely, $V''_i=\{B(t) \in V' \, | \, t \in K_{\partial,i}\}$. By construction, the graph $\B''_i:=(V'_{\partial,i} \cup V''_i,N'_i \cup J''_i)$ is a tree containing $\B'_i$. Moreover, it is immediate to verify that, for every $i,j \in \{0,1,\ldots,p\}$ with $i \neq j$, $\B''_i$ and $\B''_j$ have neither vertices nor edges in common. In particular, the set $\bigcup_{i=0}^pJ''_i$ is a maximal plug-set of $\T$. Now one can extend the union of the $\B''_i$'s to a spanning tree of $\A'$, which turns out to be a strongly-Seifert dual spanning tree of $\T$.

The reader observes that the maximal plug-set of $\T$ contained in the set of edges of a given strongly-Seifert dual spanning tree of $\T$, which exists by definition, is unique.

As a consequence of Theorem \ref{thm:main}, we have the following result, which settles the above-mentioned problem of the existence and of the construction of internal homological Seifert surfaces.

\begin{theo} \label{thm:internal}
The following assertions hold.
\begin{itemize}
 \item[$(\mr{i})$] A $1$-boundary of $\T$ has an internal homological Seifert surface in $\T$ if and only if it is corner-free.  
 \item[$(\mr{ii})$] Let $\B'=(V' \cup V'_\partial,N')$ be a strongly-Seifert dual spanning tree of $\T$ and let $\mc{N}'$ be its set of oriented dual edges. Then, for every corner-free $1$-boundary $\gamma$ of $\T$, there exists, and is unique, an internal homological Seifert surface $S=\sum_{f \in \F}b_ff$ of $\gamma$ in $\T$ such that $b_f=0$ for every $f \in \F$ with $D(f) \in \mc{N}'$. Moreover, each coefficient $b_f$ satisfies formula \eqref{eq:formula}.
 \end{itemize}
\end{theo}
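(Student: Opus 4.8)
The plan is to deduce Theorem~\ref{thm:internal} from the already-established Theorem~\ref{thm:main} by exploiting the geometric meaning of the plug-set contained in a strongly-Seifert dual spanning tree. I would prove part~$(\mr{i})$ and the two directions of part~$(\mr{ii})$ in a coordinated way, since the ``only if'' of $(\mr{i})$ and the construction in $(\mr{ii})$ are intertwined. First I would address the easy direction of $(\mr{i})$: if $\gamma$ is not corner-free, then some corner oriented edge $e\in\E_\partial^\sangle$ appears in $\gamma$ with nonzero coefficient. The key obstruction is that an internal $2$-chain $S$ has $b_f=0$ for all $f\in\F_\partial$, so in $\partial_2 S$ the coefficient of a boundary edge $e$ collects contributions only from the \emph{internal} faces incident on $e$; for a corner edge, the incidence pattern forces the coefficient of $e$ in $\partial_2 S$ to vanish (the two corner faces $f^*,f^{**}$ are boundary faces, hence excluded), so $\gamma=\partial_2 S$ cannot have a nonzero coefficient on $e$. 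I would make this precise by computing, for a corner edge $e$ with corner tetrahedron $t^*$, exactly which faces of $\T$ contain $e$ and tracking their coefficients, concluding that corner-freeness is necessary.

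For the sufficiency direction and for $(\mr{ii})$, the strategy is to apply Theorem~\ref{thm:main} to the given strongly-Seifert dual spanning tree $\B'$ and then argue that the resulting unique homological Seifert surface $S=\sum_{f\in\F}b_f f$ is automatically internal whenever $\gamma$ is corner-free. Since $\B'$ satisfies \eqref{eq:seifert-dst}, Theorem~\ref{thm:main} gives a unique $S$ with $b_f=0$ for all $f$ with $D(f)\in\mc{N}'$, and with $b_f=\lk(R_+(\gamma),\cb(D(f)))$ for every $f\in\F$. To show $S$ is internal I must show $b_f=0$ for every boundary face $f\in\F_\partial$. The faces $f\in\F_\partial$ split into two types according to whether their dual edge (a plug) lies in $\mc{N}'$. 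For the faces whose plug lies in the maximal plug-set $J\subset N'$, we get $b_f=0$ for free from the defining property of $S$. The remaining boundary faces are exactly the corner faces \emph{not} chosen into $J$ (one per corner tetrahedron is omitted), so the crux is to show that for such an omitted corner face $f$, the linking number $\lk(R_+(\gamma),\cb(D(f)))$ vanishes when $\gamma$ is corner-free.

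The main obstacle, and the technical heart of the argument, is this last vanishing claim for omitted corner faces. I would attack it by analyzing the $1$-cycle $\cb(D(f))$ for such an $f$. Because a maximal plug-set picks all regular plugs and exactly one corner plug per corner tetrahedron $t^*$, the omitted corner face $f^{**}$ has a dual plug $D(f^{**})$ whose fundamental cycle $\cb(D(f^{**}))$ can be described explicitly: its edges are the omitted plug together with a path through $\B'$. The plan is to show that this fundamental cycle is homologous in $\R^3\setminus|R_+(\gamma)|$ to a small cycle that links $R_+(\gamma)$ trivially precisely because $\gamma$ has no corner edge ``threading'' through the corner tetrahedron $t^*$. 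Concretely, I expect to use the retraction identity of Lemma~\ref{lem:C+C-} to rewrite $\lk(R_+(\gamma),\cb(D(f)))=\lk(\gamma,R_-(\cb(D(f))))$ and then observe that the support of $R_-(\cb(D(f)))$, after pushing the two omitted-corner-plug edges slightly outside $\overline{\Omega}$, can be made disjoint from all non-corner edges of $\gamma$, so that only the (absent) corner-edge coefficients of $\gamma$ could contribute; corner-freeness of $\gamma$ then kills the linking number. I would also need to verify uniqueness within the class of internal Seifert surfaces: if $S_1,S_2$ are two internal homological Seifert surfaces of $\gamma$ vanishing on the $\mc{N}'$-faces, then $S_1-S_2\in Z_2(\T;\Z)$; using the basis $\{\partial_3 t_1,\dots,\partial_3 t_{\mk t},\gamma_1,\dots,\gamma_p\}$ of $Z_2(\T;\Z)$ from Remark~\ref{rem:intro}, together with the constraints that $S_1-S_2$ is internal (no boundary faces) and vanishes on the spanning-tree faces, I expect to force $S_1-S_2=0$, since each $\gamma_i$ is supported on boundary faces and the tree condition pins down the tetrahedral part. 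Assembling these pieces yields both the existence of an internal Seifert surface in $(\mr{i})$ (sufficiency) and the full statement of $(\mr{ii})$, with formula \eqref{eq:formula} inherited verbatim from Theorem~\ref{thm:main}.
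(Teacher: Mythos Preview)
Your overall strategy matches the paper's: apply Theorem~\ref{thm:main} to the strongly-Seifert tree $\B'$, then show the unique $S$ it produces is automatically internal by checking $b_f=0$ for $f\in\F_\partial$, splitting into those whose plug lies in the maximal plug-set $J\subset N'$ (automatic) versus the omitted corner faces. Your ``only if'' direction of $(\mr{i})$ is also the paper's one-line observation; your plan to verify that the only faces incident on a corner edge $e$ are the two corner faces $f^*,f^{**}$ is the right way to make it rigorous (the link of $e$ in $\overline{\Omega}$ is an arc with endpoints $\mb{z}^*,\mb{z}^{**}$, and since $\{\mb{z}^*,\mb{z}^{**}\}$ is already an edge of that link, the arc is a single edge and $t^*$ is the only tetrahedron containing $e$).

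The imprecise step is your vanishing argument for an omitted corner face $g=f^{**}$. You propose to ``push the two omitted-corner-plug edges slightly outside $\overline{\Omega}$''; but the plugs $D(f^*)$ and $D(f^{**})$ terminate at $B(t^*)$ in the interior of $\Omega$ and cannot simply be pushed outside. What makes this homotopy possible without crossing $\gamma$ is precisely the corner-freeness hypothesis, and the paper isolates this via an algebraic decomposition rather than an ad hoc push: writing $e\in\E_\partial^\sangle$ for the common corner edge and $e'=D_\partial(e)$, one checks that
\[
\cb(D(g))=-s_2\,\cb(e')+s_2\,\coil(e)
\]
for a sign $s_2\in\{-1,1\}$ (this uses that $D(f^*)\in\mc{N}'$ and that $\B'$ restricts to a spanning tree on $\Gamma_i$, so the tree path from $B(t^*)$ back to $B(f^{**})$ is $D(f^*)$ followed by a path in $\Gamma_i$). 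Then Lemma~\ref{lem:boundary} gives $\lk(\gamma,R_-(\cb(e')))=0$ because $\gamma$ is a $1$-boundary and $|\cb(e')|\subset\Gamma_i$, while Lemma~\ref{lem:coil} gives $\lk(\gamma,R_-(\coil(e)))=\alpha_e=0$ because $\gamma$ is corner-free. Via Lemma~\ref{lem:C+C-} this yields $b_g=\lk(R_+(\gamma),\cb(D(g)))=0$. This is the precise content behind your geometric intuition; the splitting into $\cb(e')$ and $\coil(e)$ is the step your sketch is missing.

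Finally, your separate uniqueness argument via the basis $\{\partial_3 t_j,\gamma_i\}$ of $Z_2(\T;\Z)$ is unnecessary: Theorem~\ref{thm:main} already gives uniqueness among \emph{all} homological Seifert surfaces with $b_f=0$ on $\mc{N}'$-faces, hence a fortiori among the internal ones.
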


In particular, we have:

\begin{cor} \label{cor:internal}
The following assertions hold.
\begin{itemize}
 \item[$(\mr{i})$] Every internal $1$-boundary of $\T$ has an internal homological Seifert surface in $\T$.
 \item[$(\mr{ii})$] If $\T$ is the first barycentric subdivision of some triangulation of $\overline\Omega$, then every $1$-boundary of $\T$ has an internal homological Seifert surface in $\T$.
\end{itemize}
\end{cor}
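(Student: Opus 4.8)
The plan is to derive both assertions directly from Theorem~\ref{thm:internal}(i), which characterizes internal-surface existence by the corner-free condition. For part~(i), the key observation is that an internal $1$-boundary is automatically corner-free. Indeed, by definition an internal $1$-chain $\gamma=\sum_{e\in\E}a_ee$ satisfies $a_e=0$ for every $e\in\E_\partial$; since $\E_\partial^\sangle\subset\E_\partial$, this forces $a_e=0$ for every corner oriented edge $e\in\E_\partial^\sangle$ as well, which is precisely the corner-free condition. (This implication is stated in the text following the definition of corner-free: ``if $\gamma$ is internal, then it is also corner-free.'') Hence every internal $1$-boundary is corner-free, and Theorem~\ref{thm:internal}(i) immediately yields an internal homological Seifert surface.

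For part~(ii), the strategy is to invoke the remark, made just before Definition~\ref{def:cdg} in the subsection on corner edges, that the first barycentric subdivision has no corner edges: if $\T$ is the first barycentric subdivision of some triangulation of $\overline\Omega$, then $E_\partial^\sangle=\emptyset$. I would first justify this emptiness geometrically: in a barycentric subdivision every boundary edge joins a vertex of lower-dimensional type to one of strictly higher type (a vertex/edge/face barycenter), so no boundary edge can be the common edge of two boundary faces together with the special tetrahedral configuration required by the definition of corner edge. The paper already asserts $E_\partial^\sangle=\emptyset$ in this setting, so I would cite that assertion rather than reprove it. Once $E_\partial^\sangle=\emptyset$, it follows that $\E_\partial^\sangle=\emptyset$, whence \emph{every} $1$-chain of $\T$ is vacuously corner-free (the condition $a_e=0$ for all $e\in\E_\partial^\sangle$ is empty). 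In particular every $1$-boundary $\gamma$ of $\T$ is corner-free, and a second application of Theorem~\ref{thm:internal}(i) produces an internal homological Seifert surface of $\gamma$.

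The argument is thus a short two-step deduction, and I do not anticipate a serious obstacle: the entire content is already packaged inside Theorem~\ref{thm:internal}, and both parts reduce to verifying the corner-free hypothesis. The only point requiring mild care is the set-theoretic inclusion $\E_\partial^\sangle\subset\E_\partial$ used in part~(i) and the vacuous-truth reasoning in part~(ii); both are routine once the definitions of \emph{internal} and \emph{corner-free} $1$-chains are recalled. I would keep the proof to a few sentences, explicitly citing Theorem~\ref{thm:internal}(i) in each part and the earlier remark $E_\partial^\sangle=\emptyset$ for the barycentric subdivision in part~(ii).
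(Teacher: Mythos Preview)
Your proposal is correct and follows essentially the same approach as the paper: both parts reduce to verifying the corner-free hypothesis and then invoking Theorem~\ref{thm:internal}. The only cosmetic difference is that in part~(ii) the paper phrases the key fact as $K_\partial^\sangle=\emptyset$ rather than $E_\partial^\sangle=\emptyset$, but these are equivalent (as noted in Remark~\ref{maximal-plug-set}).
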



\subsection{The proofs}


We begin by proving Theorem \ref{thm:main}. First, we need three preliminary lemmas.

Let $\B'=(V' \cup V'_\partial,N')$ be a Seifert dual spanning tree of $\T$ and let $\mc{N}'$ be its set of oriented dual edges. We define $\G:=\{f \in \F \, | \, D(f) \not\in \mc{N}'\}$ and, for every $f \in \F$, we simplify the  notation by writing $\sigma(f)$ in place of $\cb(D(f))$.

\begin{lemma} \label{lem:dual-coil}
For every $f,g \in \G$, it holds:
\[
\lk\big(\partial_2f,R_-(\sigma(g))\big)=
\left\{
\begin{array}{ll}
1 & \text{if $f =g$} \\
0 & \text{if $f \neq g$}
\end{array}.
\right.
\]
\end{lemma}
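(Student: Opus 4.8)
The statement is a duality/orthogonality relation between the boundaries $\partial_2 f$ of the faces not in the tree and the retracted cycles $R_-(\sigma(g))$. My strategy is to reduce everything to the dual pairing between primal faces and dual edges, and then exploit the tree structure of $\B'$. First I would use Lemma~\ref{lem:C+C-} in reverse: since $R_-$ is applied to the dual cycle $\sigma(g)=\cb(D(g))$, and $\partial_2 f$ is a primal $1$-cycle (indeed $\partial_1\partial_2 f = 0$), I expect to rewrite
\[
\lk\big(\partial_2 f, R_-(\sigma(g))\big) = \lk\big(R_+(\partial_2 f), \sigma(g)\big),
\]
so that both arguments live in the combinatorial world of $\T$ and $\A'$ respectively, and the geometric retraction $R_-$ is absorbed. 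The key point to check is that $\partial_2 f \in Z_1(\T;\Z)$ so that $R_+$ is defined on it.

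The heart of the computation is the elementary linking number between a primal face-boundary and a single dual edge. The essential fact I would establish is the \emph{incidence duality}: for $f,g\in\F$,
\[
\lk\big(R_+(\partial_2 f), D(g)\big) = \delta_{fg},
\]
where $\delta_{fg}$ is the Kronecker delta. Geometrically this says the dual edge $D(g)$ pierces the (retracted) surface of $\partial_2 f$ exactly once when $f=g$ and not at all otherwise; this is precisely the content of the bijection $D:\F\to\E'$ together with the right-hand-rule orientations fixed in Section~\ref{subsec:cdg-coils}. I would verify this by observing that $\partial_2 f$ bounds the single oriented face $f$, whose only dual edge transverse to it is $D(f)$, with the sign being $+1$ because of how the orientations of $D(f)$ and $f$ are matched via $\sign(\bs{\nu}(f)\cdot\bs{\tau})$ in the definition of $D(f)$.

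With the incidence duality in hand, I would expand $\sigma(g)=\cb(D(g)) = C'_{\mb{w}'} + D(g) - C'_{\mb{v}'}$ by bilinearity of the linking number. The two tree-path terms $C'_{\mb{w}'}$ and $C'_{\mb{v}'}$ are, by \eqref{eq:C'}, integer combinations of oriented dual edges $e'_k \in \mc{N}'$; each such $e'_k$ equals $\pm D(h)$ for a face $h$ with $D(h)\in\mc{N}'$, hence $h\notin\G$ and in particular $h\neq f$ (since $f\in\G$). By the incidence duality each such term contributes $\lk(R_+(\partial_2 f), D(h)) = \delta_{fh} = 0$. Thus only the middle term $D(g)$ survives, and
\[
\lk\big(R_+(\partial_2 f), \sigma(g)\big) = \lk\big(R_+(\partial_2 f), D(g)\big) = \delta_{fg},
\]
which is exactly the claimed value. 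The main obstacle I anticipate is the careful sign bookkeeping in the incidence duality: one must confirm that the right-hand-rule orientation placed on $D(f)$ is exactly the one making the single transverse intersection with (the retraction of) $\partial_2 f$ count with coefficient $+1$, rather than $-1$. This is a purely local check at one face, but it is where all the orientation conventions from the definitions of $D(f)$, $R_+$, and the linking number must be reconciled simultaneously.
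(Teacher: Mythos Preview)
Your overall strategy is sound in spirit, but there are two genuine gaps that prevent the argument from going through as written.

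\textbf{Gap 1: tree edges need not be dual edges of faces.} You write that ``each such $e'_k$ equals $\pm D(h)$ for a face $h$ with $D(h)\in\mc{N}'$''. This is false. The spanning tree $\B'$ lives in the \emph{complete} dual graph $\A'=(V'\cup V'_\partial,E'\cup E'_\partial)$, and by Definition~\ref{def:sette} a Seifert dual spanning tree is required to contain a spanning tree of each boundary graph $\A'_i$. Hence the tree paths $C'_{\mb{w}'},C'_{\mb{v}'}$ will in general contain oriented dual edges $e'_k\in\E'_\partial$, i.e.\ edges of the form $D_\partial(e)$ for $e\in\E_\partial$, which are \emph{not} of the form $D(h)$ for any face $h$. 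Your incidence-duality identity $\lk(R_+(\partial_2 f),D(h))=\delta_{fh}$ says nothing about these boundary dual edges, so the conclusion that the tree contributions vanish is unjustified. Worse, if $f\in\F_\partial$ then $|f|\subset\partial\Omega$ and so does $|e'_k|$ for $e'_k\in\E'_\partial$, so non-transverse contact is unavoidable and must be analysed separately.

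\textbf{Gap 2: bilinearity of $\lk$ is only stated for cycles.} The paper defines $\lk(\gamma,\eta)$ only for $1$-cycles $\gamma,\eta$, and the bilinearity displayed after \eqref{eq:def-lk} requires each summand to be a cycle. Neither $D(g)$ nor $C'_{\mb{v}'}$ is a cycle, so ``expand $\sigma(g)$ by bilinearity of the linking number'' and the formula $\lk(R_+(\partial_2 f),D(g))=\delta_{fg}$ are not well-posed as stated. You can repair this by fixing a specific Seifert surface for $\partial_2 f$ (namely $f$ itself) and speaking of signed intersection numbers of individual oriented segments with that surface; this pairing \emph{is} linear in the chain. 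But then you must argue transversality, and---returning to Gap~1---you must show that the boundary dual edges contribute zero.

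The paper's proof bypasses both problems by working directly with $\lk(\partial_2 f,R_-(\sigma(g)))$ and using $f$ as Seifert surface. The point of the retraction $R_-$ is precisely that it pushes each boundary dual edge $e'\in\E'_\partial$ into $\R^3\setminus\overline\Omega$ except at its two endpoints in $V'_\partial$; hence $|f|\cap|R_-(\sigma(g))|$ can be non-empty only at $B(f)$ itself. The proof then reduces to two local transversality computations: the case $f=g$ (intersection at $B(f)$ coming from $D(g)$, contributing $+1$) and the case $f\neq g$, $f\in\F_\partial$, where $B(f)$ appears as an interior vertex of the retracted tree path (contribution $0$ after a small deformation). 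This is exactly the boundary-edge analysis your argument is missing.
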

\begin{proof}
Let $f,g \in \mc{G}$ and let $\vv',\mb{w}' \in V' \cup V'_\partial$ such that $\partial_1D(g)=\vv'-\mb{w}'$. By definition of $\sigma(g)$, there exist, and are unique, an integer $\ell \geq 2$, a $\ell$-upla of pairwise disjoint vertices $(p'_0,p'_1,\ldots,p'_\ell)$ of $V' \cup V'_\partial$ and, for every $i \in \{1,\ldots,\ell\}$, $\delta_i \in \{-1,1\}$ and $e'_i \in \mc{N}'$ such that $p'_0=\vv'$, $p'_\ell=\mb{w}'$,  $\partial_1(\delta_ie'_i)=p'_i-p'_{i-1}$ for every $i \in \{1,\ldots,\ell\}$ and $\sigma(g)=D(g)+\sum_{i=1}^\ell\delta_ie'_i$.

There are only two cases in which the intersection $|f| \cap |R_-(\sigma(g))|$ is non-empty, and hence the linking number $\lk(\partial_2f,R_-(\sigma(g)))$ may be different from zero.

\textit{Case 1: Assume $f=g$}. In this case, we have that $|f| \cap |R_-(\sigma(g))|=\{B(f)\}$. We must prove that $\lk(\partial_2f,R_-(\sigma(g)))=1$. Suppose that $f \not\in \F_\partial$. Observe that the intersection between $f$ and $R_-(\sigma(g))$ is not transverse, because $D(g)=[\mb{w}',B(f)]+[B(f),\vv']$. Let $a'_1$ be a point of the segment $|[\mb{w}',B(f)]|$ different from $B(f)$, let $b'_1$ be a point of the segment $|[B(f),\vv']|$ different from $B(f)$ and let $\widehat{\gamma}_1$ be the $1$-cycle of $\R^3$ defined by setting
\[
\widehat{\gamma}_1:=[\mb{w}',a'_1]+[a'_1,b'_1]+[b'_1,\vv']+\sum_{i=1}^\ell\delta_ir_-(e'_i)\, ,
\]
see Figure~\ref{fig:Tintersection} on the left.
If $a'_1$ and $b'_1$ are chosen sufficiently close to $B(f)$, we have that $\widehat{\gamma}_1$ is homologous to $R_-(\sigma(g))$ in $\R^3 \setminus |\partial_2f|$, it intersects $f$ transversally in one point belonging to $|[a'_1,b'_1]| \setminus \{a'_1,b'_1\}$ and $\sign(\bs{\tau}([a'_1,b'_1]) \cdot \bs{\nu}(f))=1$. By the definition of linking number, we infer that  $\lk(\partial_2f,R_-(\sigma(g)))=1$.

Suppose now that $f \in \F_\partial$. Changing the orientation of $f$ if necessary, we may also suppose that $\vv'=B(f)$. It follows that $p'_1$ is the barycenter of an oriented face $f_1$ in $\E_\partial$ having an (oriented) edge $e$ in common with $f$ and hence $\delta_1r_-(e'_1)=[\vv',\mb{x}_{e'_1}]+[\mb{x}_{e'_1},p'_1]$ for some point $\mb{x}_{e'_1} \in \R^3 \setminus \overline{\Omega}$ close to $B(e)$ (see Subsection \ref{subsec:lk-recogn-retract} for the definition of $r_-$). Let us proceed as above. Choose a point $a'_2 \in |[\mb{w}',\vv']| \setminus \{\vv'\}$ close to $\vv'$ and a point $b'_2 \in |[\vv',\mb{x}_{e'_1}]| \setminus \{\vv'\}$ close to $\vv'$. Then the $1$-cycle $\widehat{\gamma}_2$ of $\R^3$ defined by setting
\[
\widehat{\gamma}_2:=[\mb{w}',a'_2]+[a'_2,b'_2]+[b'_2,\mb{x}_{e'_1}]+[\mb{x}_{e'_1},p'_1]+\sum_{i=2}^\ell\delta_ir_-(e'_i)\, ,
\]
see Figure~\ref{fig:Tintersection} on the right,
is homologous to $R_-(\sigma(g))$ in $\R^3 \setminus |\partial_2f|$, it intersects $f$ transversally in one point belonging to $|[a'_2,b'_2]| \setminus \{a'_2,b'_2\}$ and $\sign(\bs{\tau}([a'_2,b'_2]) \cdot \bs{\nu}(f))=1$. It follows that $\lk(\partial_2f,R_-(\sigma(g)))=1$, as desired.
\begin{figure}[!htb]
\centering
 \includegraphics[width=\textwidth]{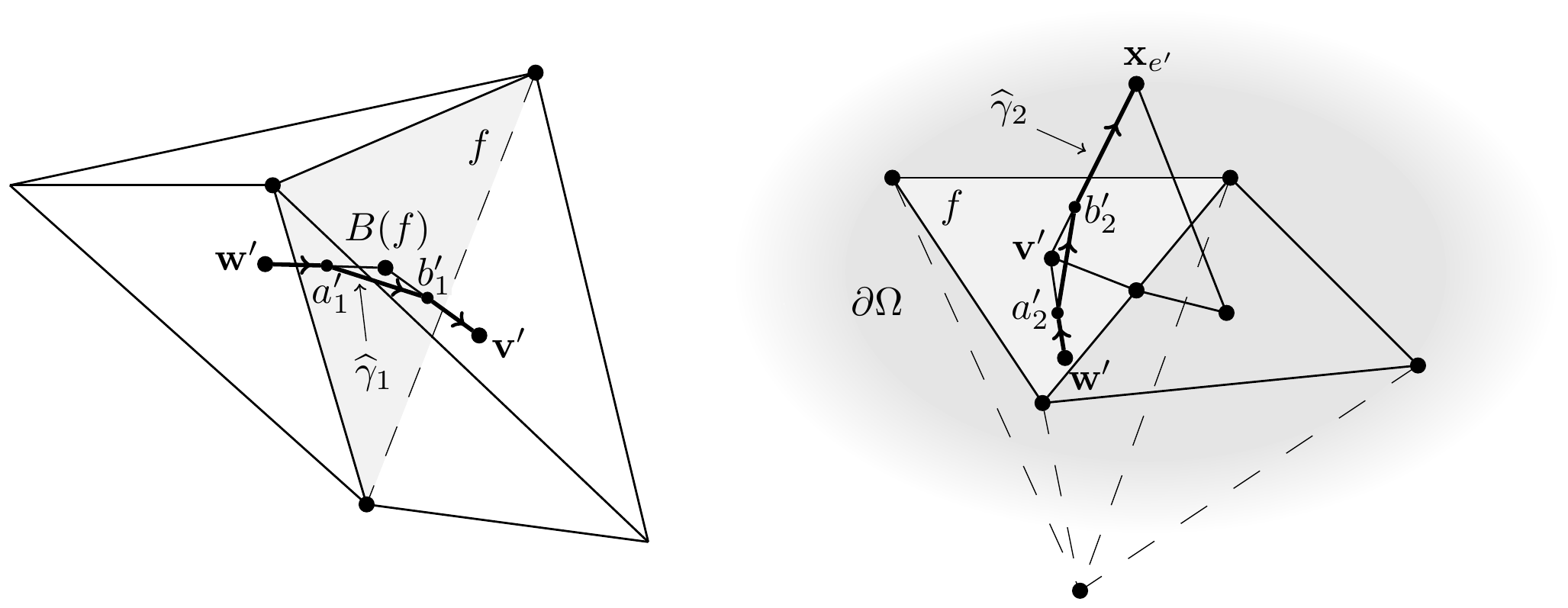}
  \caption{The $1$-cycles $\widehat{\gamma}_1$ (on the left) and $\widehat{\gamma}_2$ (on the right).} \label{fig:Tintersection}
\end{figure}

\textit{Case 2. Assume that $f \neq g$, $f \in \F_\partial$ and there exists $h \in \{1,\ldots,\ell-1\}$ such that $p'_h=B(f)$ and both  $e'_h$ and $e'_{h+1}$ belong to $\E'_\partial$.} We know that $\delta_hr_-(e'_h)=[p'_{h-1},\mb{x}_{e'_h}]+[\mb{x}_{e'_h},p'_h]$ and $\delta_{h+1}r_-(e'_{h+1})=[p'_h,\mb{x}_{e'_{h+1}}]+[\mb{x}_{e'_{h+1}},p'_{h+1}]$ for some  $\mb{x}_{e'_h},\mb{x}_{e'_{h+1}} \in \R^3 \setminus \overline{\Omega}$. In particular, it holds:
\[
R_-(\sigma(g))=c+[p'_{h-1},\mb{x}_{e'_h}]+[\mb{x}_{e'_h},p'_h]+[p'_h,\mb{x}_{e'_{h+1}}]+[\mb{x}_{e'_{h+1}},p'_{h+1}],
\]
where $c:=D(g)+\sum_{i \in \{1,\ldots,\ell\} \setminus \{h,h+1\}}\delta_ir_-(e'_i)$. Let $a'_3 \in |[\mb{x}_{e'_h},p'_h]| \setminus \{p'_h\}$, let $b'_3  \in |[p'_h,\mb{x}_{e'_{h+1}}]| \setminus \{p'_h\}$ and let $\widehat{\gamma}_3$ be the $1$-cycle of $\R^3$ defined by setting
\[
\widehat{\gamma}_3:=c+[p'_{h-1},\mb{x}_{e'_h}]+[\mb{x}_{e'_h},a'_3]+[a'_3,b'_3]+[b'_3,\mb{x}_{e'_{h+1}}]+[\mb{x}_{e'_{h+1}},p'_{h+1}], ,
\]
see Figure~\ref{fig:gamma3}.
If $a'_3$ and $b'_3$ are chosen sufficiently close to $p'_h$, then $\widehat{\gamma}_3$ is homologous to $R_-(\sigma(g))$ in $\R^3 \setminus |\partial_2f|$ and it does not intersects $|f|$. It follows that $\lk(\partial_2f,R_-(\sigma(g)))=0$.
\begin{figure}[!htb]
\centering
 \includegraphics[width=.55\textwidth]{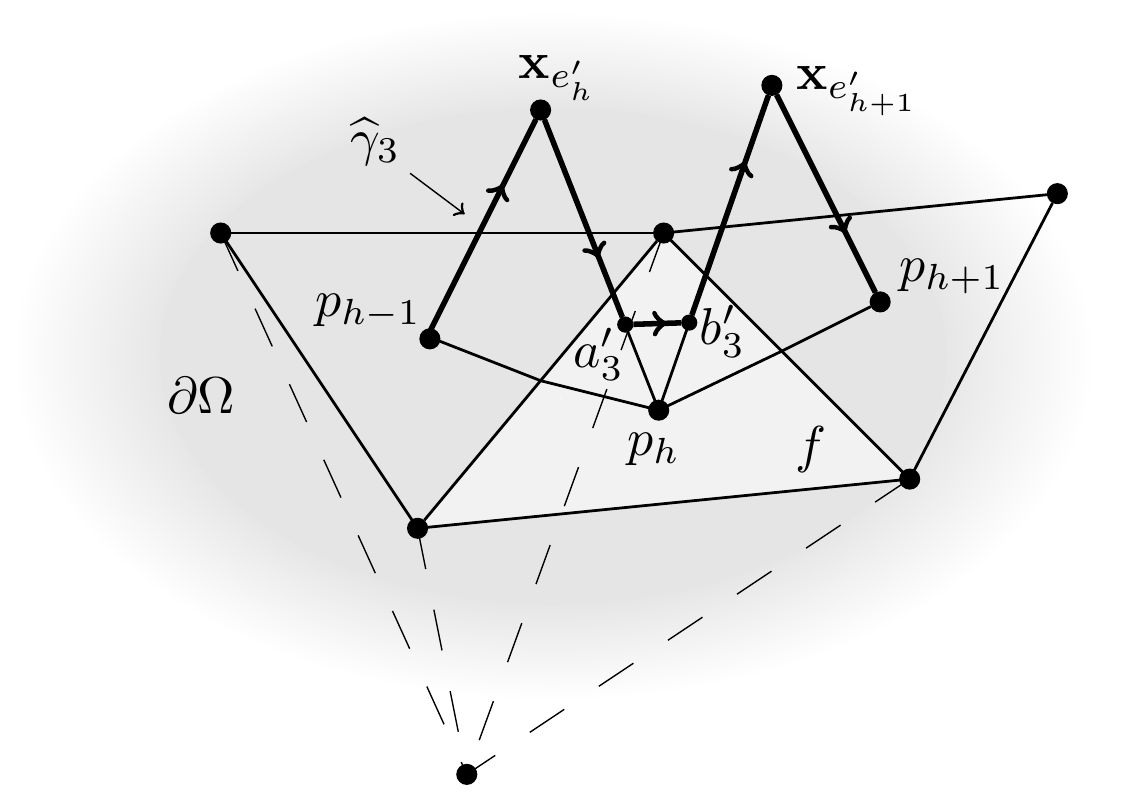}
  \caption{The $1$-cycle $\widehat{\gamma}_3$.} \label{fig:gamma3}
\end{figure}

This completes the proof.
\end{proof}

\begin{lemma} \label{lem:coil}
Let $\xi=\sum_{e \in \E}\alpha_ee$ be a $1$-cycle of $\T$. Then, for every $e^* \in \E$, it holds:
\begin{equation} \label{eq:coil}
\lk\big(\xi,R_-(\coil(e^*))\big)=\alpha_{e^*}.
\end{equation}
In particular, $\xi=0$ if and only if $\lk\big(\xi,R_-(\coil(e^*))\big)=0$ for every $e^* \in \E$.
\end{lemma}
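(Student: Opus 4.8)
The goal is to compute $\lk(\xi, R_-(\coil(e^*)))$ where $\coil(e^*) = \partial_2 D(e^*)$ is the coil of the edge $e^*$, realized as a $1$-cycle of the complete dual graph $\A'$. The plan is to exploit the adjoint relationship between $R_+$ and $R_-$ provided by Lemma~\ref{lem:C+C-}, together with the explicit geometric/combinatorial form of the coil $\coil(e^*)$ and the very local nature of the intersection between a dual face and the primal edges.

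First I would apply Lemma~\ref{lem:C+C-} to swap the retractions, writing
\[
\lk\big(\xi,R_-(\coil(e^*))\big)=\lk\big(R_+(\xi),\coil(e^*)\big).
\]
This moves the problem to computing a linking number between the retracted primal cycle $R_+(\xi)$ and the coil $\coil(e^*)=\partial_2 D(e^*)$, which is the boundary of the dual face $D(e^*)$. The decisive observation is that $\coil(e^*)$ bounds a $2$-chain in $\R^3$, namely $D(e^*)$ itself (since $\coil(e^*)=\partial_2 D(e^*)$), whose support is a union of small triangles threaded around the single primal edge $e^*$. Thus the linking number with $R_+(\xi)$ counts exactly how many times $R_+(\xi)$ passes through the dual face $D(e^*)$, weighted by sign — and by the construction of $D(e^*)$ via the right-hand rule, the support $|D(e^*)|$ meets the primal $1$-skeleton only along the edge $e^*$, which $R_+(\xi)$ (or $\xi$ itself, away from the boundary) crosses transversally exactly once per unit coefficient.

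The cleanest route is therefore to use $D(e^*)$ as the chosen homological Seifert surface of $\coil(e^*)$ in the defining formula \eqref{eq:def-lk} for the linking number, and count the signed intersections of (a transverse representative of) $R_+(\xi)$ with the oriented triangles comprising $D(e^*)$. Here I would carefully check transversality: the edge $e^*$ is dual-paired with $D(e^*)$ under the bijection $D:\E\to\F'$, and the sign conventions built into $D(e^*)$ via $\sign(\bs{\tau}(e)\cdot\bs{\nu}(\cdots))$ are precisely engineered so that $e^*$ pierces $|D(e^*)|$ with sign $+1$ and every other edge of $\T$ either misses $|D(e^*)|$ or, if its support touches the support, does so in a way that contributes zero after the transverse perturbation. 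Summing over $\xi=\sum_e\alpha_e e$ then isolates the coefficient $\alpha_{e^*}$, giving $\lk(R_+(\xi),\coil(e^*))=\alpha_{e^*}$, which is \eqref{eq:coil}. The retraction $R_+$ only matters for boundary edges, where it nudges the segment slightly inward without changing which dual face it threads, so it does not alter the count.

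The main obstacle I anticipate is the transversality and sign bookkeeping at the boundary. For an internal edge the dual face $D(e^*)$ is a closed ``umbrella'' of triangles fully surrounding $e^*$, and the count is clean; but for $e^* \in \E_\partial$ the dual face is only a partial fan, and one must confirm that the inward retraction $r_+$ of boundary edges of $\xi$ interacts correctly with these partial fans — that no spurious intersections arise and that the single genuine crossing retains sign $+1$. Once \eqref{eq:coil} is established for every $e^*$, the final ``in particular'' claim is immediate: if $\xi \neq 0$ then some $\alpha_{e^*}\neq 0$, whence $\lk(\xi,R_-(\coil(e^*)))=\alpha_{e^*}\neq 0$; conversely if all these linking numbers vanish then every coefficient $\alpha_{e^*}$ is zero, so $\xi=0$.
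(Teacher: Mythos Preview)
Your approach is correct and genuinely different from the paper's. The paper does \emph{not} invoke Lemma~\ref{lem:C+C-} here; instead it fixes an auxiliary spanning tree $(V,L)$ of the primal $1$-skeleton with $e^*\notin L$, decomposes $\xi$ as $\xi=\sum_{e\in\E}\alpha_e\sigma_e$ where $\sigma_e$ is the elementary cycle ``tree-path $+$ $e$ $+$ tree-path back'', and then computes each $\lk(\sigma_e,R_-(\coil(e^*)))$ separately, using $D(e^*)$ (or $D(e^*)+S'_{D_\partial(e^*)}$ in the boundary case) as Seifert surface for $R_-(\coil(e^*))$. The geometric kernel is the same as yours---only $e^*$ among all primal edges meets $|D(e^*)|$, and it does so once with sign $+1$---but the paper's packaging via elementary cycles has the minor advantage that each $\sigma_e$ is an honest $1$-cycle, so the linking number is defined cycle-by-cycle without having to argue that the per-edge intersection count with a fixed Seifert surface is legitimate. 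Your route through Lemma~\ref{lem:C+C-} is more streamlined and avoids the auxiliary spanning tree entirely; the price is that the boundary case requires you to verify that $r_+(e^*)$ still threads $|D(e^*)|$ once with the correct sign and that $r_+(e)$ for boundary edges $e\neq e^*$ introduces no spurious intersections---both true, but needing the careful bookkeeping you flagged.
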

\begin{proof}
Fix $e^* \in \E$, a spanning tree $(V,L)$ of the graph $(V,E)$ such that $|\partial_1 e^*| \not \in L$ and a vertex $\mb{a} \in V$, we consider as a root of $(V,L)$. Denote by $\mc{L}$ the set of oriented edges in $\E$ determined by the corresponding edges in $L$; namely, $\mc{L}:=\big\{e \in \E \, \big| \, |\partial_1e| \in L\big\}$. For every $\mb{v} \in V$, denote by $C_{\mb{v}}$ the (unique) $1$-chain of $\T$ such that $|C_{\mb{v}}| \subset \bigcup_{e \in \mc{L}}|e|$ and $\partial_1 C_{\bf v}={\bf v}-{\bf a}$. Given $e=[\mb{a}_e,\mb{b}_e] \in \E$, we denote by $\sigma_e$ the $1$-cycle of $\T$ given by $\sigma_e:=C_{\mb{a}_e}+e-C_{\mb{b}_e}$.

By hypothesis, $\xi$ is a $1$-cycle of $\T$ and hence $0 = \partial_1 \xi=\sum_{e \in \E}\alpha_e(\mb{b}_e-\mb{a}_e)$ in $C_0(\T;\Z)$. It follows that $\sum_{e \in \E}\alpha_e(C_{\mb{b}_e}-C_{\mb{a}_e})=0$ in $C_1(\T;\Z)$ as well. In this way, we obtain that
\[
\sum_{e \in \E} \alpha_e \sigma_e= \sum_{e \in \E} \alpha_e (C_{\mb{a}_e}+e-C_{\mb{b}_e})=\xi-\sum_{e \in \E} \alpha_e (C_{\mb{b}_e}-C_{\mb{a}_e})=\xi.
\]
Then
\[
\lk\big(\xi,R_-(\coil(e^*))\big)=\sum_{e \in \E} \alpha_e \, \lk\big(\sigma_e,R_-(\coil(e^*))\big).
\]
Thanks to the latter equality, it suffices to show that
\[
\lk\big(\sigma_e,R_-(\coil(e^*))\big)
=\left\{
\begin{array}{ll}
1 & \text{if $e=e^*$} \\
0 & \text{if $e \neq e^*$}
\end{array}.
\right.
\]
To do this, we use an argument similar to the one employed in the proof of the preceding lemma. However, contrarily to such a proof, we omit the details concerning the construction of ``small deformations of $\sigma_e$'' to obtain trasversality. If $e \in \mc{L}$, then $e \neq e^*$ (because $e^* \not\in \mc{L}$), $\sigma_e=0$ and hence $\lk\big(\sigma_e,R_-(\coil(e^*))\big)=0$. If $e \not\in \mc{L} \cup \{e^*\}$, then $|\sigma_e| \cap |D(e^*)|=\emptyset$, so $\lk\big(\sigma_e,R_-(\coil(e^*))\big)=0$. Suppose $e=e^* \in \E \setminus \E_\partial$. In this case, we have that $R_-(\coil(e))=\coil(e)=\partial_2D(e)$ and $|\sigma_e| \cap |D(e)|=\{B(e)\}$. By \eqref{eq:def-lk}, it follows immediately that $\lk\big(\sigma_e,R_-(\coil(e))\big)=\pm 1$. The sign of such a linking number is positive, because the triangles forming $D(e)$ was oriented by $e$ via the right hand rule. Finally, consider the case in which $e=e^* \in \E_\partial$. By construction (see Definition \ref{def:coil} and points \eqref{eq:S'} and \eqref{eq:R_-}), we have that $R_-(\coil(e))=\partial_2\big(D(e)+S'_{D_\partial(e)}\big)$ and $|\sigma_e| \cap \big|D(e)+S'_{D_\partial(e)}\big|=\{B(e)\}$. Once again, we infer that $\lk\big(\sigma_e,R_-(\coil(e))\big)=1$.
\end{proof}

\begin{lemma} \label{lem:boundary}
Let $\gamma$ be a $1$-boundary of $\T$. Then, for every $e' \in \E'_\partial$, it holds:
\[
\lk\big(\gamma,R_-(\cb(e'))\big)=0.
\]
\end{lemma}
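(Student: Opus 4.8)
The plan is to use Lemma~\ref{lem:C+C-} to transfer the linking number computation from the retraction $R_-$ on the dual side to the retraction $R_+$ on the primal side, and then to exploit the hypothesis that $\gamma$ is a $1$-boundary together with the homological invariance of the linking number. Concretely, fix $e' \in \E'_\partial$. The $1$-cycle $\cb(e')$ of $\A'$ is an element of $Z_1(\A';\Z)$, so Lemma~\ref{lem:C+C-} applies and gives
\[
\lk\big(R_+(\gamma),\cb(e')\big)=\lk\big(\gamma,R_-(\cb(e'))\big).
\]
Thus it suffices to prove that the left-hand side vanishes; the advantage is that on the left the first slot is $R_+(\gamma)$, a $1$-cycle of $\R^3$ built from $\gamma$, and the hypothesis on $\gamma$ is most naturally exploited there.

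Next I would use that $\gamma$ is a $1$-boundary of $\T$: there exists a $2$-chain $S$ of $\T$ with $\partial_2 S=\gamma$. By \eqref{eq:R+}, $R_+(\gamma)-\gamma=\partial_2\big(\sum_{e \in \E_\partial}\alpha_e S_e\big)$, so $R_+(\gamma)$ is itself a $1$-boundary of $\R^3$; indeed $R_+(\gamma)=\partial_2\big(S+\sum_{e \in \E_\partial}\alpha_e S_e\big)$. The key geometric point is then that the support of this bounding $2$-chain lies inside $\overline{\Omega}$ (the faces of $S$ lie in $\overline{\Omega}$, and each auxiliary triangle $S_e$ is contained in a tetrahedron of $\T$, hence also in $\overline{\Omega}$), whereas the support of $\cb(e')$ must be disjoint from this bounding surface. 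If I can show that $R_+(\gamma)$ bounds in $\R^3 \setminus |\cb(e')|$, then \eqref{eq:bounds} immediately yields $\lk(R_+(\gamma),\cb(e'))=0$, completing the proof.

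The main obstacle, and the step requiring the most care, is verifying this disjointness. The subtlety is that $\cb(e')=C'_{\mb{w}'}+e'-C'_{\mb{v}'}$ is a cycle in the dual graph $\A'$, whose edges are dual edges; a generic dual edge $D(f)$ passes through barycenters of tetrahedra and through $B(f)$, so its support does meet $\overline{\Omega}$ and can touch the bounding $2$-chain at such barycenters. What makes the argument work is that $e' \in \E'_\partial$ is a \emph{boundary} dual edge, so its distinguished segment near $\partial\Omega$ can be pushed just outside $\overline{\Omega}$. I would therefore argue that $\cb(e')$ is homologous in $\R^3 \setminus |R_+(\gamma)|$ to a cycle whose support avoids the bounding $2$-chain entirely, performing the small perturbation exactly as in the proof of Lemma~\ref{lem:dual-coil} (Case~2): near each dual vertex $B(f)$ lying on a boundary face one replaces the two incident dual-edge segments by a short detour through the exterior point $\mb{x}_{e'}$, thereby lifting the cycle off of $|f|$ and off the primal surface. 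The part $R_+(\gamma)$ was itself designed (by pushing the boundary edges of $\gamma$ strictly into the interior of their incident tetrahedra) precisely so that it is transverse to and disjoint from these dual constructions, which is what guarantees that the perturbed $\cb(e')$ misses the bounding $2$-chain and lets \eqref{eq:bounds} apply.
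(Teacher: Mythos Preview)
Your argument has a genuine gap: you never use the hypothesis that $\B'$ is a \emph{Seifert} dual spanning tree, and without it the disjointness you need simply fails. The cycle $\cb(e')=C'_{\mb{w}'}+e'-C'_{\mb{v}'}$ is built from paths in $\B'$ to the global root $\mb{a}'$, and these paths will in general traverse many \emph{internal} dual edges $D(f)$ with $f\in\F\setminus\F_\partial$. Such a dual edge crosses the face $f$ transversally at $B(f)$; if $f$ happens to lie in the support of your bounding $2$-chain $S$, then $|\cb(e')|$ meets $|S|$ transversally there, and no small perturbation removes this crossing. The deformation you propose, borrowed from Case~2 of Lemma~\ref{lem:dual-coil}, only lifts the cycle off barycenters of \emph{boundary} faces via the exterior points $\mb{x}_{e'}$; it does nothing at barycenters of internal faces, which is precisely where the problem lies. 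In fact, asking that $\cb(e')$ be homologous in $\R^3\setminus|R_+(\gamma)|$ to a cycle avoiding a fixed Seifert surface of $R_+(\gamma)$ is equivalent to the vanishing of the linking number you are trying to prove, so the argument as written is circular.

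The point the paper exploits is that, because $\B'$ restricted to $\Gamma_i$ is a spanning tree of $\A'_i$, the two paths $C'_{\mb{v}'}$ and $C'_{\mb{w}'}$ from the root to the endpoints of $e'\in\E'_{\partial,i}$ share a common initial segment $C'_{\mb{b}'_i}$ (the part reaching $\Gamma_i$), and this common part cancels in $\cb(e')$. What remains is a cycle supported entirely in $\Gamma_i\subset\partial\Omega$. Only then can one push $R_-(\cb(e'))$ (or $\cb(e')$ itself) into $\R^3\setminus\overline{\Omega}$ using the collar, after which the fact that $\gamma$ bounds in $\overline{\Omega}$ finishes the proof via \eqref{eq:bounds}. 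Your detour through Lemma~\ref{lem:C+C-} is harmless but unnecessary; the essential missing step is the use of the Seifert property to force $|\cb(e')|\subset\Gamma_i$.
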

\begin{proof}
If $e' \in \mc{N}'$, then $\cb(e')=0$ and the result is trivial. Choose $e' \in \E'_\partial \setminus \mc{N}'$ and indicate by $i$ the unique index in $\{0,1,\ldots,p\}$ such that $|\partial_1e'| \in E'_{\partial,i}$ or, equivalently, $|e'| \subset \Gamma_i$. Since $\B'_{\partial,i}:=(V'_{\partial,i},N' \cap E'_{\partial,i})$ is a spanning tree of $\A'_i$, there exists a unique vertex $\mb{b}'_i$ in $V'_{\partial,i}$ such that $|C'_{\mb{b}'_i}| \subset \bigcup_{e' \in \E'}|e'|$; namely, in the expression of $C'_{\mb{b}'_i}$, the oriented dual edges in $\E'_\partial$ appear with null coefficients (see \eqref{eq:C'} for the definition of $C'_{\mb{b}'_i}$). Let $\E'_{\partial,i}$ be the set of oriented dual edges in $\E'_\partial$ corresponding to the edges in $E'_{\partial,i}$; namely, $\E'_{\partial,i}:=\big\{e' \in \E'_\partial \, \big| \, |\partial_1e'| \in E'_{\partial,i}\big\}$. For every $\mb{v}' \in V'_{\partial,i}$, denote by $c'_{i,\mb{v}'}$ the unique $1$-chain of $\B'_{\partial,i}$ from $\mb{b}'_i$ to $\mb{v}'$. Let $e' \in \E'_{\partial,i}$ with $\partial_1e'=\mb{v}'-\mb{w}'$. Observe that $C'_{\mb{v}'}=C'_{\mb{b}'_i}+c'_{i,\mb{v}'}$, $C'_{\mb{w}'}=C'_{\mb{b}'_i}+c'_{i,\mb{w}'}$ and hence
\[
\cb(e')=c'_{i,\mb{w}'}+e'-c'_{i,\mb{v}'}.
\]
It follows that $|\cb(e')| \subset \Gamma_i$ and hence $|R_-(\cb(e'))| \subset (\R^3 \setminus \overline{\Omega}) \cup V'_{\partial,i}$. Since $\partial\Omega$ has a collar in $\R^3 \setminus \Omega$, it is easy to find a $1$-cycle $\eta$ of $\R^3$ such that $|\eta| \subset \R^3 \setminus \overline{\Omega}$ and $\eta$ is homologous to $R_-(\cb(e'))$ in $(\R^3 \setminus \overline{\Omega}) \cup V'_{\partial,i} \subset \R^3 \setminus |\gamma|$. Thanks to \eqref{eq:homol-inv}, we infer that $\lk\big(\gamma,R_-(\cb(e'))\big)=\lk(\gamma,\eta)$. On the other hand, by hypothesis, $\gamma$ bounds in $\overline{\Omega}$. Since $\overline{\Omega} \subset \R^3 \setminus |\eta|$, $\gamma$ bounds in $\R^3 \setminus |\eta|$ as well. Equality \eqref{eq:bounds} ensures that $\lk(\gamma,\eta)=0$, as desired.
\end{proof}

We are now in position to prove our results.

\begin{proof}[Proof of Theorem~\ref{thm:main}]
We start by proving the uniqueness of solution. Suppose that $S=\sum_{f \in \F}b_ff$ is a homological Seifert surface of $\gamma$ in $\T$ such that $b_f=0$ for every $f$ with $D(f) \in \cal N'$; namely, for every $f \in \F \setminus \G$. We must show that $b_f=\lk\big(R_+(\gamma),\sigma(f)\big)$ for every $f \in \G$. The reader observes that, if $f \in \F \setminus \G$, then $\sigma(f)=0$ and hence $\lk\big(R_+(\gamma),\sigma(f)\big)$ is automatically equal to $0=b_f$. Choose $f^* \in \G$. By Lemma~\ref{lem:C+C-}, we infer that
\begin{align*}
\lk\big(R_+(\gamma),\sigma(f^*)\big) &=
\textstyle
\lk\big(\gamma,R_-(\sigma(f^*))\big)=\lk\!\left(\sum_{f \in \G}b_f\partial_2f,R_-(\sigma(f^*))\right)=\\
&=
\textstyle
\sum_{f \in \G}b_f \, \lk\big(\partial_2f,R_-(\sigma(f^*))\big)
.
\end{align*}
Now Lemma~\ref{lem:dual-coil} implies that
\[
\sum_{f \in \G}b_f \,
\lk\big(\partial_2f,R_-(\sigma(f^*))\big)=b_{f^*}.
\]
In this way, we have that $\lk\big(R_+(\gamma),\sigma(f^*)\big)=b_{f^*}$ for every $f^* \in \G$, as desired.

It remains to prove that, if $b_f:=\lk\big(R_+(\gamma),\sigma(f)\big)$ for every $f \in \G$, then the boundary of the $2$-chain $S:=\sum_{f \in \G}b_ff$ of $\T$ is equal to $\gamma$. This is equivalent to show that the $1$-cycle $\eta:=\gamma-\partial_2S=\gamma-\sum_{f \in \mc{G}}b_f \, \partial_2f$ of $\T$ is equal to the zero $1$-chain of $\T$. Thanks to Lemma \ref{lem:coil}, this is in turn equivalent to show that $\lk\big(\eta,R_-(\coil(e))\big)=0$ for every $e \in \E$.

Fix $e \in \E$ and write $\coil(e)$ explicitly as follows:
\[
\coil(e)=\sum_{e' \in \E' \cup \E'_\partial} a'_{e'}e'
\]
for some (unique) integer $a'_{e'}$. For every $e' \in \E' \cup \E'_\partial$, denote by $\mb{v}'(e')$ and $\mb{w}'(e')$ the dual vertices in $V'$ such that $\partial_1e'=\mb{v}'(e')-\mb{w}'(e')$. Since $\coil(e)$ is a $1$-cycle of $\A'$ (a $1$-boundary of $\A'$ indeed), we have that $0=\partial_1\coil(e)=\sum_{e' \in \E' \cup \E'_\partial}a'_{e'}(\mb{v}'(e')-\mb{w}'(e'))$. It follows that $\sum_{e' \in \E' \cup \E'_\partial}a'_{e'}(C'_{\mb{v}'(e')}-C'_{\mb{w}'(e')})=0$ as well, and hence
\begin{equation} \label{eq:sigma}
\coil(e)=\sum_{e' \in \E' \cup \E'_\partial} a'_{e'}e'-\sum_{e' \in \E' \cup \E'_\partial}a'_{e'}(C'_{\mb{v}'(e')}-C'_{\mb{w}'(e')})=\sum_{e' \in \E' \cup \E'_\partial} a'_{e'}\cb(e').
\end{equation}

In this way, in order to complete the proof, it suffices to prove that
\begin{center}
$\lk\big(\eta,R_-(\cb(e')\big)=0$ for every $e' \in \E' \cup \E'_\partial$.
\end{center}

We distinguish three cases: $e'\in \mc{N}'$, $e' \in \E'\setminus \mc{N}'$ and $e' \in \E'_\partial \setminus \mc{N}'$.

If $e'\in \mc{N}'$, then $\cb(e')=0$ and hence $\lk\big(\eta,R_-(\cb(e'))\big)=0$.

If $e' \in \E' \setminus \mc{N}'$, then $e'=D(f^*)$ for some (unique) $f^* \in \mc{G}$. Bearing in mind Lemma~\ref{lem:dual-coil}, we obtain:
\begin{align*}
\lk\big(\eta,R_-(\cb(e'))\big) &=\lk\big(\eta,R_-(\sigma(f^*))\big)=\\ &=\lk\big(\gamma,R_-(\sigma(f^*))\big) - \sum_{f \in \mc{G}} b_f \, \lk\big(\partial_2f,R_-(\sigma(f^*))\big)=\\
&=b_{f^*}-b_{f^*}=0.
\end{align*}

Finally, if $e' \in \E'_\partial \setminus \mc{N}'$, then Lemma~\ref{lem:boundary} ensures that $\lk\big(\eta,R_-(\cb(e'))\big)=0$, because $\eta$ is a $1$-boundary of $\T$.
\end{proof}

We conclude with the proofs of Theorem \ref{thm:internal} and of its Corollary \ref{cor:internal}.

\begin{proof}[Proof of Theorem~\ref{thm:internal}]
Let $\gamma$ be a $1$-boundary of $\T$. It is evident that the boundary of any internal $2$-chain of $\T$ cannot contain oriented edges determined by corner edges of $\T$. Hence if $\gamma$ admits an internal homological Seifert surface in $\T$, then it must be corner-free.

Suppose $\gamma$ is corner-free. Let $\B'=(V' \cup V'_\partial,N')$ and $\mc{N}'$ be as in the statement of point $(\mr{ii})$, and let $J$ be the maximal plug-set of $\T$ contained in $N'$. Write $J$ as in Remark~\ref{maximal-plug-set}: $J=\mr{J}_{\T}^{\mr{r}} \cup J'$, where $J'$ is the set of corner plugs of $\T$ belonging to $J$. Denote by $F^\sangle$ the set of corner faces of $\T$ inducing the corner plugs in $J'$.

By Theorem~\ref{thm:main}, there exists, and is unique, a homological Seifert surface $S=\sum_{f \in \F}b_ff$ of $\gamma$ in $\T$ such that $b_f=0$ for every $f \in \F$ with $D(f) \in \mc{N}'$. Moreover, each $b_f$ satisfies formula \eqref{eq:formula}.

We must prove that $S$ is internal; namely, $b_f=0$ for every $f \in \F_\partial$. Since $J \subset N'$, it suffices to show the following: if $g$ is an oriented face in $\F_\partial$ such that the corresponding (non-oriented) face belongs to $F_\partial^\sangle \setminus F^\sangle$, then $b_g=0$. Let $g$ be such an oriented face in $\F_\partial$. Then there exist vertices $\mb{v},\mb{w},\mb{z}^*,\mb{z}^{**} \in V_\partial \cap \Gamma_i$ for some (unique) $i \in \{0,1,\ldots,p\}$ such that the tetrahedron  $\{\mb{v},\mb{w},\mb{z}^*,\mb{z}^{**}\}$ of $\T$ is a corner tetrahedron, its face $\{\mb{v},\mb{w},\mb{z}^*\}$ belongs to $F^\sangle$ and the oriented face in $\F$ corresponding to $\{\mb{v},\mb{w},\mb{z}^{**}\}$ is equal to $g$. Indicate by $f$ the oriented face in $\F$ corresponding to $\{\mb{v},\mb{w},\mb{z}^*\}$, by $e$ the oriented edge in $\E_\partial$ corresponding to $\{\vv,\mb{w}\}$, by $e'$ the oriented dual edge $D_\partial(e)$ in $\E'_\partial$ and by $\vv',\mb{w}'$ the vertices in $V'_\partial$ such that $\partial_1(e')=\vv'-\mb{w}'$. Observe that there exist, and are unique, $s_1,s_2 \in \{-1,1\}$ such that
\begin{equation} \label{eq:ce}
\coil(e)=e'+s_1D(f)+s_2D(g).
\end{equation}
In particular, since $\partial_1(\coil(e))=0$, we have:
\begin{equation} \label{eq:v'w'}
\vv'-\mb{w}'=\partial_1(-s_1D(f)-s_2D(g)).
\end{equation}

By hypothesis, $\B'_{\partial,i}:=(V'_{\partial,i},N' \cap E'_{\partial,i})$ is a spanning tree of $\A'_i$. In this way, there exists a unique $1$-chain $C$ in $\B'_{\partial,i}$ such that $\partial_1(C)=\mb{w}'-\vv'$. It follows that $\cb(e')=e'+C$. Moreover, by combining \eqref{eq:v'w'} with the fact that $D(f) \in \mc{N}'$, we infer at once that
\[
\sigma(g)=-s_2(-s_1D(f)-s_2D(g)+C)=D(g)+s_1s_2D(f)-s_2C.
\]
On the other hand, by \eqref{eq:ce}, we have also that $-s_1D(f)-s_2D(g)=e'-\coil(e)$ and hence
\begin{equation} \label{eq:sg}
\sigma(g)=-s_2(e'-\coil(e)+C)=-s_2\big(\cb(e')-\coil(e)\big)=-s_2\cb(e')+s_2\coil(e).
\end{equation}

By Lemma \ref{lem:boundary}, we know that $\lk(\gamma,R_-(\cb(e')))=0$. Moreover, since $\gamma$ is corner-free and $e \in \E_\partial^\sangle$, Lemma~\ref{lem:coil} ensures that $\lk(\gamma, R_-(\coil(e)))=0$. In this way, bearing in mind \eqref{eq:sg} and Lemma \ref{lem:C+C-}, we have:
\begin{align*}
b_g&=\lk(R_+(\gamma),\sigma(g))=-s_2\,\lk(R_+(\gamma),\cb(e'))+s_2\,\lk(R_+(\gamma),\coil(e))=\\
&=-s_2\,\lk(\gamma,R_-(\cb(e')))+s_2\,\lk(\gamma,R_-(\coil(e)))=0,
\end{align*}
as desired. This completes the proof.
\end{proof}

\begin{proof}[Proof of Corollary~\ref{cor:internal}]
$(\mr{i})$ An internal $1$-boundary of $\T$ is corner-free and hence it has an internal homological Seifert surface in $\T$ by Theorem \ref{thm:internal}.

$(\mr{ii})$ As above, this point follows immediately from Theorem \ref{thm:internal}. Indeed, if $\T$ is the first barycentric subdivision of some triangulation of $\overline\Omega$, then $K_\partial^\sangle=\emptyset$ and hence every $1$-boundary of $\T$ is corner-free.
\end{proof}


\section{An elimination algorithm} \label{sec:elim_alg}

Let $\gamma=\sum_{e \in \E}a_ee$ be a given $1$-boundary of $\T$. A $2$-chain $S=\sum_{f \in \F}b_ff$ of $\T$ is a homological Seifert surface of $\gamma$ in $\T$ if its coefficients $\{b_f\}_{f \in \F}$ satisfy the following equation in $C_1(\T;\Z)$:
\begin{equation} \label{eq:eq-hss}
\sum_{f \in \F}b_f\partial_2f=\sum_{e \in \E}a_ee.
\end{equation}
Let us write this equation more explicitly as a linear system with as many equations as edges and as many unknowns as faces of $\T$. Given $e \in \E$, let $\F(e)$ be the set $\big\{f \in \F \, \big| \, |e| \subset |f|\big\}$ of oriented faces in $\F$ incident on $e$ and let $\o_e:\F(e) \lra \{-1,1\}$ be the function sending $f \in \F(e)$ into the coefficient of $e$ in the expression of $\partial_2f$ as a formal linear combination of oriented edges in $\E$. Equation \eqref{eq:eq-hss} is equivalent to the linear system
\[
\sum_{f \in \F(e)}\o_e(f)b_f=a_e \quad \text{if $e \in \E$},
\]
where the unknowns $\{b_f\}_{f \in \F}$ are integers. Theorem \ref{thm:main} ensures that, if $\B'=(V' \cup V'_\partial,N')$ is a Seifert dual spanning tree of $\T$ and $\mc{N}'$ is its set of oriented dual edges, then the linear system
\begin{align}
& \textstyle \sum_{f \in \F(e)}\o_e(f)b_f=a_e \quad \text{if $e \in \E$} \label{eq:oe1}\\
&  b_f=0  \qquad \qquad \qquad \quad \; \; \; \text{if $D(f) \in \mc{N}'$} \label{eq:oe2}
\end{align}
has a unique solution given by the formula:
\begin{equation} \label{eq:G}
b_f=\lk\big(R_+(\gamma),\cb(D(f))\big)
\end{equation}
for every $f \in \G$, where $\G=\{f \in \F \, | \, D(f) \not\in \mc{N}'\}$.

As we have just recalled in the introduction, the linking number can be computed accurately. However, the use of formula \eqref{eq:G} is too expensive if $\T$ is fine. In fact, if $\mk{v}$ is the number of vertices of $\T$, $g$ is the first Betti number of $\overline{\Omega}$ and $\sharp\G$ is the cardinality of $\G$, then $\sharp\G$ is greater than or equal to $\frac{1}{2}\mk{v}+1-g$, which is usually huge if $\T$ is fine. Let us explain the latter assertion. Let $\mk{e}$, $\mk{f}$ and $\mk{t}$ be the numbers of edges, of faces and of tetrahedra of $\T$, respectively. Let us prove that $\sharp\G=\mk{e}-\mk{v}+1-g \geq \frac{1}{2}\mk{v}+1-g$. We know that $\sharp\G=\mk{f}-(\mk{t}+p)$ (see Remark~\ref{rem:intro}). The Euler characteristic $\chi(\T)=\mk{v}-\mk{e}+\mk{f}-\mk{t}$ of $\T$ is equal the sum $\sum_{j=0}^3(-1)^jr_j$, where $r_j$ is the rank of $H_j(\T;\Z)$. Since $r_0=1$, $r_1=g$, $r_2=p$ and $r_3=0$, we infer that $\mk{v}-\mk{e}+\mk{f}-\mk{t}=1-g+p$ and hence $\sharp\G=\mk{e}-\mk{v}+1-g$. Recall that, in a finite graph, the sum of degrees of its vertices equals two times the number of its edges. Apply this result to the graph $\A=(V,E)$. Since each vertex $v$ in $V$ belongs to at least one tetrahedron of $\T$, the degree of $v$, as a vertex of $\A$, is $\geq 3$. It follows that $\mk{e} \geq \frac{3}{2}\mk{v}$ and hence $\sharp\G \geq \frac{1}{2}\mk{v}+1-g$.

We present below a simple elimination algorithm that simplifies drastically the construction of homological Seifert surfaces given by Theorem \ref{thm:main}. Let us denote by $\mc{R}$ the set of oriented faces $f$ in $\F$ for which the corresponding coefficient $b_f$ is already known. Initially, thanks to \eqref{eq:oe2}, we have that $\mc{R}=\F \setminus \G$. If there exist edges $e$ such that exactly one oriented face $f^* \in \F(e)$ does not belong to $\mc{R}$; namely, if there exist equations of linear system \eqref{eq:oe1} with just one remaining unknown, then we compute the coefficients $b_{f^*}$ via such equations and update $\mc{R}$. If there are not such edges and $\mc{R} \ne \F$, then we pick an oriented face $f \in \F \setminus \mc{R}$, compute $b_f$ using explicit formula \eqref{eq:G} and update $\mc{R}$. More precisely, the algorithm reads as follows:
\begin{alg} \label{alg:main}
\begin{enumerate}
\item[]
  \item $\mc R:= \mc{F} \setminus \mc{G}$, $\mc D:=\E$.
  \item  while $\mc R \ne \F$
  \begin{enumerate}
    \item $n_{\mc R}:=card(\mc R)$
    \item for every $e \in \mc D$
    \begin{enumerate}
      \item if every oriented face of $\F(e)$ belong to $\mc R$
\begin{enumerate}
\item $\mc D=\mc D\setminus \{e\}$
\end{enumerate}
      \item if exactly one oriented face  $f^* \in \F(e)$ does not belong to $\mc R$
      \begin{enumerate}
        \item compute $b_f$ via \eqref{eq:oe1} 
        \item $\mc R=\mc R \cup \{ f \}$
        \item $\mc D=\mc D\setminus \{e\}$
      \end{enumerate}
    \end{enumerate}
    \item if $card(\mc R)=n_{\mc R}$
    \begin{enumerate}
      \item pick $f \not \in \mc R$ and compute $b_f=\lk(R_+(\gamma),\cb(D(f)))$
      \item $\mc R=\mc R \cup \{ f \}$
    \end{enumerate}
  \end{enumerate}
\end{enumerate}
\end{alg}

It is always possible to choose a Seifert dual spanning tree $\B'$ of $\T$ in such a way that, for some $e \in \E$, exactly one oriented face $f^* \in \F(e)$ does not belong to $\mc{N}'$. In fact, in all the numerical experiments we have considered, including knotted $1$-boundaries and homologically non-trivial computational domains, when we use breadth first spanning trees (BFS) \cite{cormen}, the elimination algorithm determines the homological Seifert surface directly, without computing any linking number.


\section{Numerical results} \label{sec:numerical}

Two different strategies for the construction of the Seifert dual spanning tree $\cal B'$ of $\T$ have been considered. In the first one, $\cal B'$ contains just one plug for each connected component of the boundary of $\Omega$, while, in the second one, $\cal B'$ contains a maximal plug-set $J$. Then, a spanning tree of the graph $(V', E')$, containing the selected plugs, is constructed in both cases by using a breadth first search (BFS) \cite{cormen} strategy.

The two strategies are now illustrated by means of a toy problem obtained by triangulating a cube, see Figure~\ref{fig:trivial}a. The first technique to construct a Seifert dual spanning tree $\mathcal{B}'$, denoted by BFS$_1$, consists of the following steps:
\begin{enumerate}
  \item Build a BFS spanning tree on each graph $\mathcal{A}_i'$ induced by $\mathcal{A}'$ on the connected component $\Gamma_i$ of $\partial\Omega$. We remark that this step is usually not required in practice as remarked later.
  \item Build an ``internal'' spanning tree of the graph $(V', E')$.
  \item For each $\Gamma_i$, add exactly one plug induced by a face in $\Gamma_i$.
\end{enumerate}
For the toy problem, a possible ``internal'' tree and the additional edge added at Step $3$ of the preceding procedure are represented in Figure \ref{fig:trivial}b. Given the $1$-boundary $\gamma$ represented in Figure \ref{fig:trivial}a by thicker edges, one can run the elimination algorithm Alg.~1, obtaining the $2$-chain $S_{BFS_1}$ whose support is depicted in Figure \ref{fig:trivial}c.

\begin{figure}[!htb]
\centering
 \includegraphics[width=\textwidth]{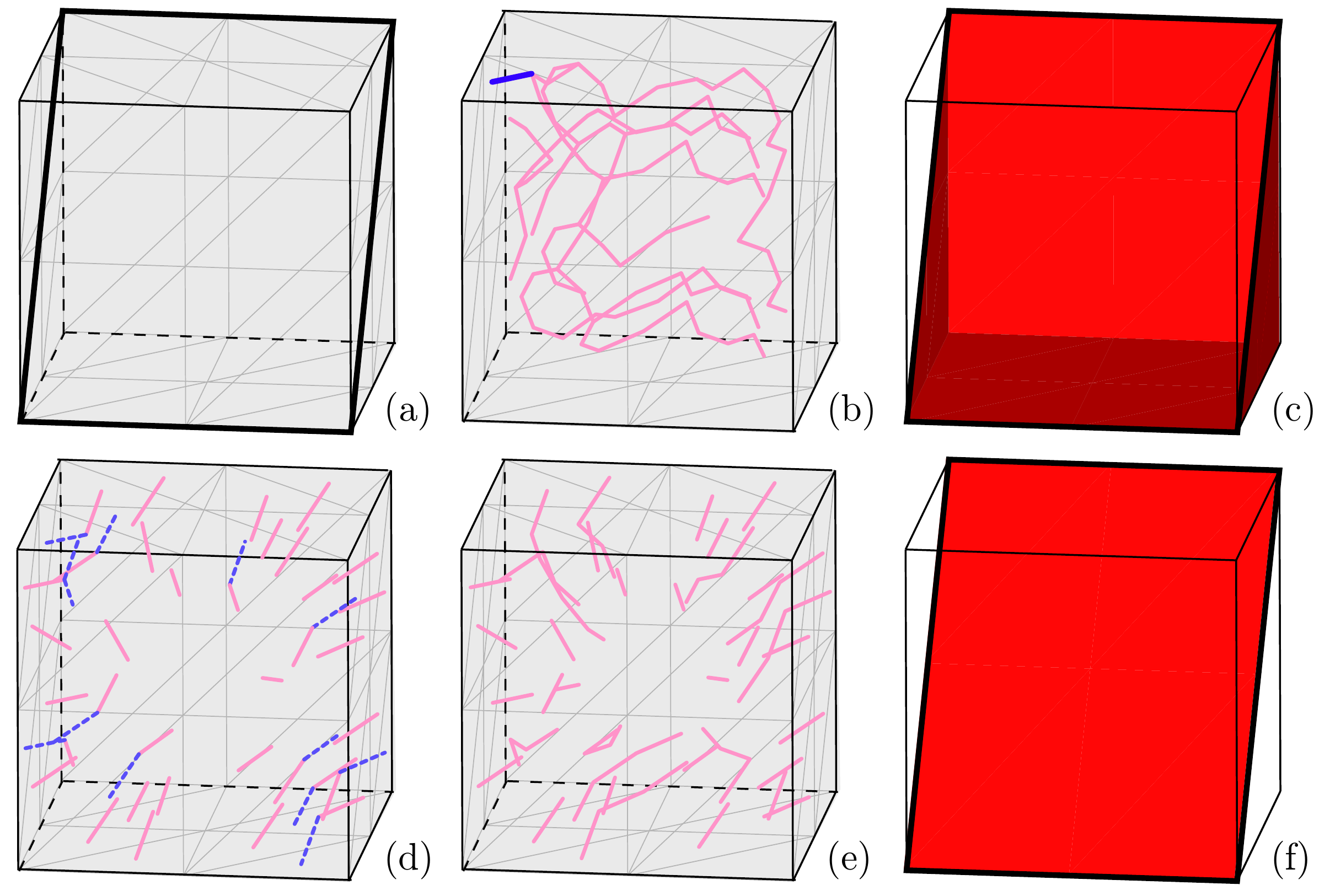}
  \caption{(a) A toy problem is obtained by triangulating a cube. Thicker edges represent the support of the $1$-boundary $\gamma$, whereas thin edges represent the edges of the triangulation of the cube contained in its boundary. (b) The Seifert dual spanning tree obtained with the BFS$_1$ technique (the tree in $\A'$ is not shown). The thicker dual edge represents the edge added at Step 3 of the algorithm. (c) The support of the $2$-chain obtained with the BFS$_1$ tree. (d) Continuous dual edges represent a maximal plug-set $J$, whereas the dotted dual edges are the plugs induced by corner faces that do not belong to  $J$. (e) The tree is completed in the interior of the triangulation by a BFS strategy (the tree in $\A'$ is not shown). (f) The support of the $2$-chain obtained with the BFS$_2$ tree.} \label{fig:trivial}
\end{figure}

\begin{figure}[!htb]
\centering
 \includegraphics[width=\textwidth]{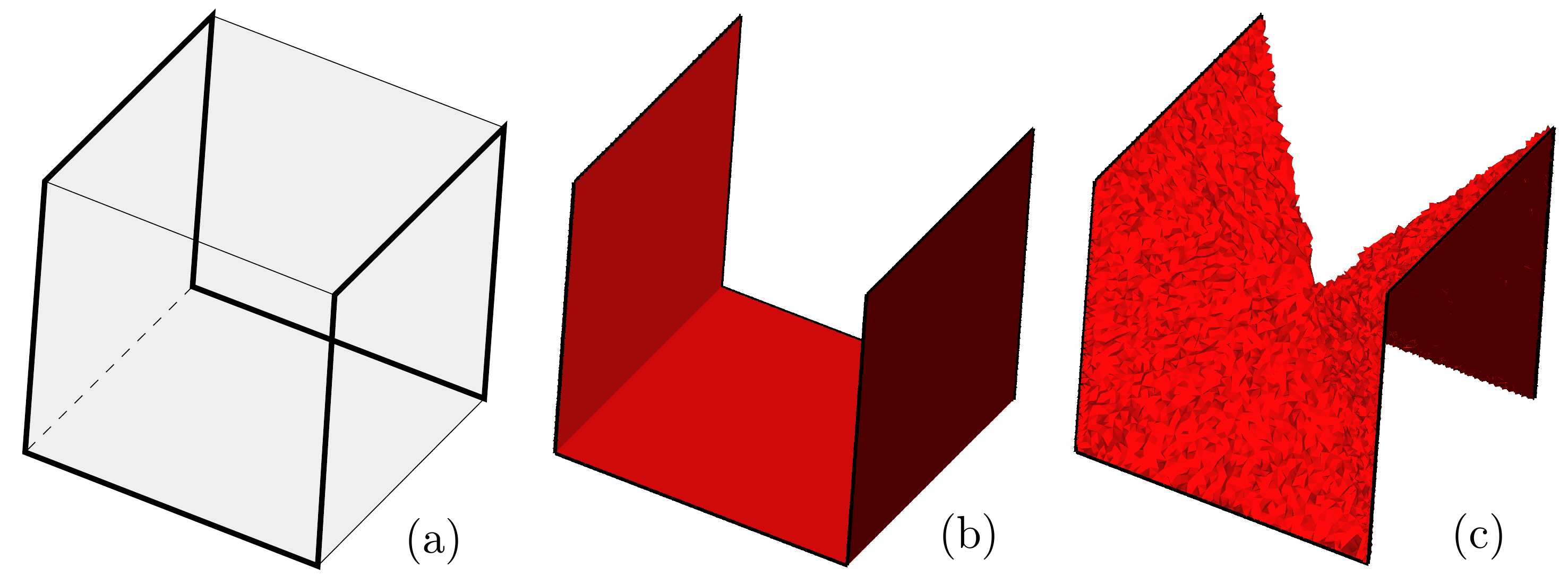}
  \caption{Toy problem $2$. (a) The $1$-boundary  $\gamma$ is represented by the thicker edges. (b) The support of the $2$-chain obtained by producing the Seifert dual spanning tree with the BFS$_1$ strategy. (c) The support of the $2$-chain obtained by producing the Seifert dual spanning tree with the BFS$_2$ technique.} \label{fig:scherk}
\end{figure}

\begin{figure}[!htb]
\centering
 \includegraphics[width=\textwidth]{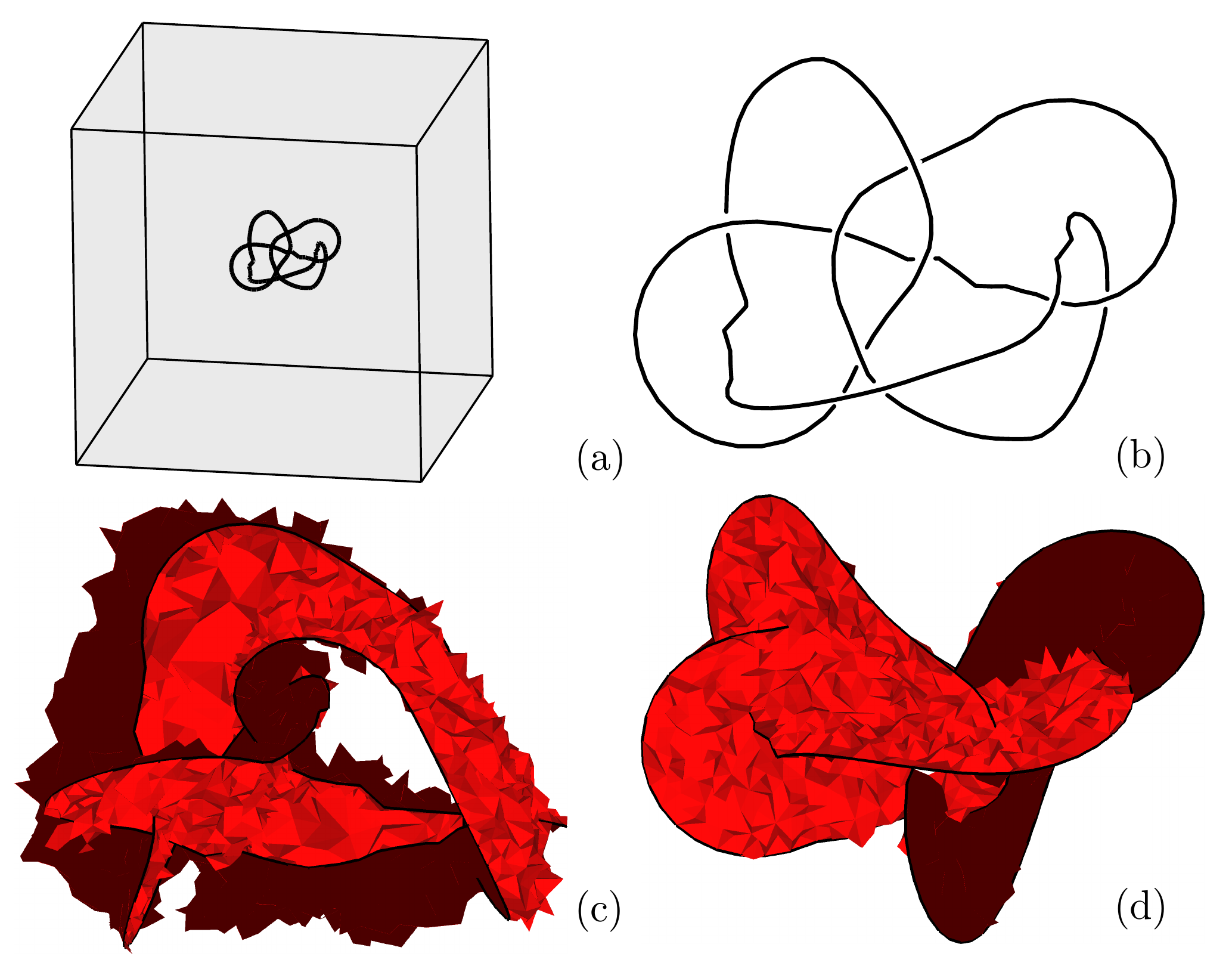}
  \caption{(a) The support of the $1$-boundary $\gamma$ is a $8_{21}$ knot placed inside a box outlined in the picture. (b) A zoom on $\gamma$. (c) The support of the $2$-chain obtained by producing the Seifert dual spanning tree with the BFS$_1$ strategy. (d) The support of the $2$-chain obtained by producing the Seifert dual spanning tree with the BFS$_2$ technique.} \label{fig:knot}
\end{figure}

The second technique, more closer to the philosophy of this paper and denoted by BFS$_2$, constructs the Seifert dual spanning tree $\cal B'$ as follows:
\begin{enumerate}
  \item Build a BFS spanning tree on each graph $\mathcal{A}_i'$ (not required in practice).
  \item Build a maximal plug-set $J$. That is, for each tetrahedron with at least one face in $F_{\partial}$, add exactly one plug induced by one of its faces in $F_{\partial}$.
  \item Form a tree in $(V',E')$ with the BFS strategy, by using all tetrahedra with at least one face in $F_{\partial}$ as root.
  \item If $\partial \Omega$ has more than one connected component, the preceding steps return a forest. To obtain a spanning tree of $\mathcal{A}'$, one may run the Kruskal algorithm \cite{cormen} starting from the forest already constructed.
\end{enumerate}

A possible maximal plug-set for the toy problem is represented in Figure \ref{fig:trivial}d. In the same picture, the dotted dual edges represent the plugs induced by corner faces whose plugs do not belong to the maximal plug-set $J$. The tree extended to the interior of the domain by running the BFS algorithm is represented in Figure \ref{fig:trivial}e. By running the elimination algorithm Alg. 1, one obtains the $2$-chain $S_{BFS_2}$, whose support is represented in Figure \ref{fig:trivial}f. In both cases, the obtained surfaces are non self-intersecting and $S_{BFS_2}$ is minimal.

In what follows, we present results for four more complicated benchmark problems.

We first consider a different toy problem in which $\gamma$ is the $1$-boundary of the cube represented in Figure \ref{fig:scherk}a by thicker edges. Figures \ref{fig:scherk}b and \ref{fig:scherk}c illustrate the support of the two $2$-chains $S_{BFS_1}$ and $S_{BFS_2}$ obtained by the BFS$_1$ and BFS$_2$ techniques, respectively.

Then, we take $\gamma$ as the non-trivial knot $8_{21}$ inside a cube, see Figure \ref{fig:knot}a (see also \cite[p. 394]{ROL76}). Figure \ref{fig:knot}b represents a zoom on $\gamma$. Figures \ref{fig:knot}c and \ref{fig:knot}d illustrate the support of the two $2$-chains $S_{BFS_1}$ and $S_{BFS_2}$ obtained by the BFS$_1$ and BFS$_2$ techniques, respectively.

As a third benchmark, we consider $\gamma$ as the Hopf link inside a cube, see Figure \ref{fig:hopf}a. The reader observes that the support of $\gamma$ has two connected components. Figure \ref{fig:hopf}b represents a zoom on $\gamma$. Figures \ref{fig:hopf}c and \ref{fig:hopf}d show the support of the two $2$-chains $S_{BFS_1}$ and $S_{BFS_2}$ obtained by the BFS$_1$ and BFS$_2$ techniques, respectively.

\begin{figure}[!htb]
\centering
 \includegraphics[width=\textwidth]{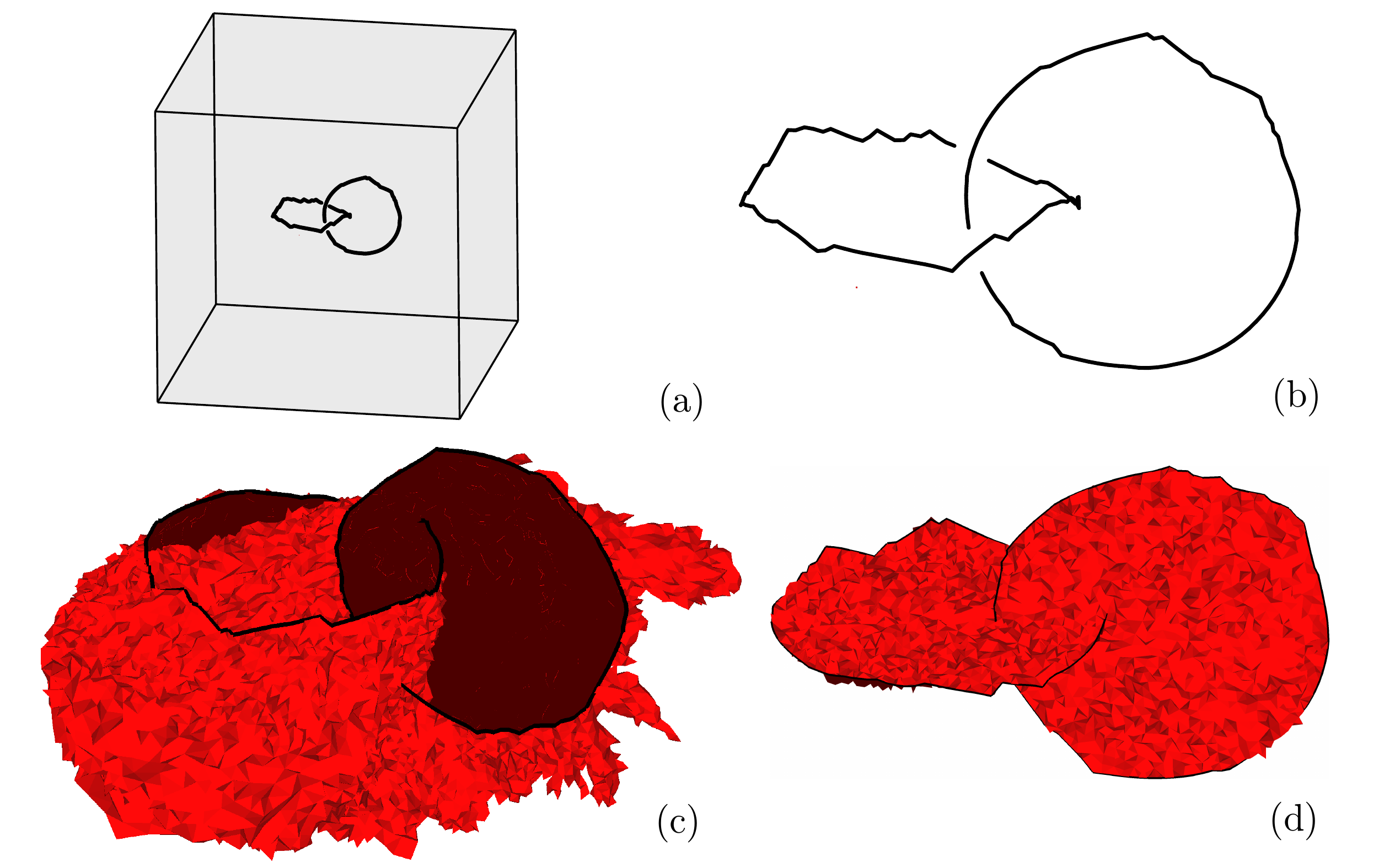}
  \caption{(a) The $1$-boundary $\gamma$ is a Hopf link placed inside a cube. (b) A zoom on $\gamma$. (c) The support of the $2$-chain obtained by producing the Seifert dual spanning tree with the BFS$_1$ strategy. (d) The support of the $2$-chain obtained by producing the Seifert dual spanning tree with the BFS$_2$ technique.} \label{fig:hopf}
\end{figure}

As a final example, we take $\gamma$ as a pair of disjoint circumferences placed in the boundary of a toric shell; namely, the difference of two coaxial solid tori, see Figure \ref{fig:nontrivial}a. Differently from the preceding cases, the computational domain; namely, the toric shell, is homologically non-trivial. Figures \ref{fig:nontrivial}b and \ref{fig:nontrivial}c illustrate the support of the two $2$-chains $S_{BFS_1}$ and $S_{BFS_2}$ obtained by the BFS$_1$ and BFS$_2$ techniques, respectively.

\begin{figure}[!htb]
\centering
 \includegraphics[width=\textwidth]{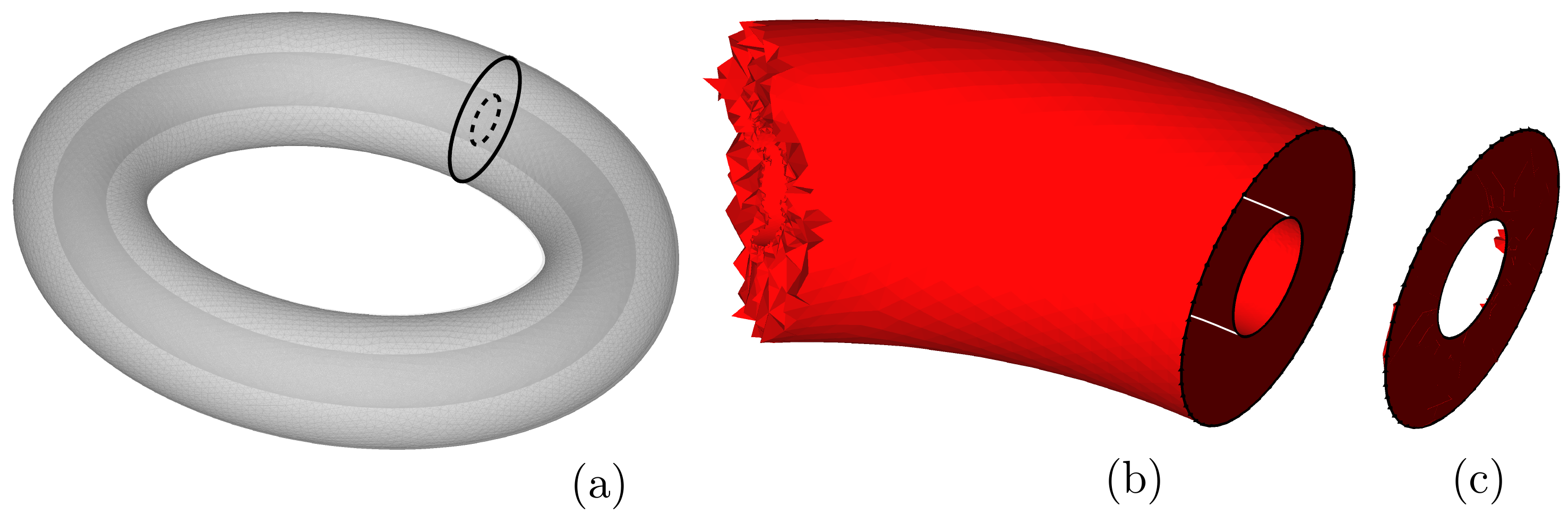}
  \caption{(a) The support of the $1$-boundary $\gamma$ is a pair of disjoint circumferences, outlined in the picture, placed on the boundary of a toric shell (namely, the difference between two coaxial solid tori). (b) The support of the $2$-chain obtained by producing the Seifert dual spanning tree with the BFS$_1$ strategy. (c) The support of the $2$-chain obtained by producing the Seifert dual spanning tree with the BFS$_2$ strategy.} \label{fig:nontrivial}
\end{figure}

The information about the number of geometric elements of the triangulation $\T$ and of the edges belonging to the support of the $1$-boundary $\gamma$ are stored in Table \ref{tab:card}. Table \ref{tab:res} shows the number of faces contained in the support of the $2$-chains obtained by the BFS$_1$ and BFS$_2$ techniques, together with the time (in milliseconds) required to obtain them. In Table \ref{tab:res}, it is also stated whether the support of the $2$-chains is self-intersecting or not.

After a considerable number of numerical experiments, we notice that the elimination algorithm Alg. 1 is able to construct the homological Seifert surface without the computation of any linking number. This happens also when the domain is not homologically trivial. Therefore, as anticipated, there is no need to compute a spanning tree of each graph $\mathcal{A}_i'$ and even to consider the dual graph $(V'_{\partial},E'_{\partial})$ on the boundary of $\Omega$. In fact, in the elimi\-nation step {\it 2.(b)}, only $\cal N' \cap \cal E'$ is used. The complete knowledge of $\cal N'$; namely, the construction of $\B_i'$ for every $i \in \{0,1,2,\dots,p\}$, is required just in the direct computation step. We do not have any explanation of this surprising feature of the algorithm yet. We also note that heuristically; namely, in all tested cases, the BFS$_2$ approach provides homological Seifert surfaces with strongly reduced support w.r.t. the BFS$_1$ technique.

Finally, we remark that when many homological Seifert surfaces are required on the same triangulation, Alg. 1 can be vectorialized in such a way that all surfaces are generated at once.

\begin{table}
  \centering
\rowcolors{2}{gray!25}{white}
\begin{tabular}{cccccc}
\rowcolor{gray!50}
Name & Tetrahedra & Faces & Edges & Vertices & $\mathrm{card}|\gamma|$ \\
Toy problem & 48 & 120 & 98 & 27 & 8\\
Toy problem $2$ & 479,435 & 973,963 & 583,183 & 88,656 & 341\\
$8_{21}$ knot & 87,221 & 175,317 & 102,212 & 14,117 & 170\\
Hopf link & 800,020 & 1,600,537 & 937,631 & 137,115 & 235\\
Toric shell & 1,851,494 & 3,871,379 & 2,419,350 & 399,465 & 176
\end{tabular}
 \caption{The number of geometric elements of the triangulation and of the edges belonging to the support of the $1$-boundary $\gamma$.}\label{tab:card}
\end{table}

\begin{table}
  \centering
\rowcolors{2}{gray!25}{white}
\begin{tabular}{ccccccc}
\rowcolor{gray!50}
Name & $\mathrm{card}|S_{BFS_1}|$ & Time$_{BFS_1}$ & Self-inters. & $\mathrm{card}|S_{BFS_2}|$ & Time$_{BFS_2}$ & Self-inters. \\
Toy problem & 24 & 2 & No & 8 & 1 & No\\
Toy problem $2$ & 15,089 & 220 & No & 15,023 & 233 & No\\
$8_{21}$ knot & 4188 & 38 & Yes & 2663 & 37 & Yes\\
Hopf link & 15,871 & 378 & Yes & 4841 & 407 & Yes\\
Toric shell & 46,786 & 986 & No & 1662 & 961 & No
\end{tabular}
 \caption{The number of faces belonging to the support $|S|$ of the homological Seifert surface $S$ and the time required (in milliseconds) for its generation by the proposed elimination algorithm, making use of the two different strategies for constructing a Seifert dual spanning tree. It is also mentioned whether the obtained surface is self-intersecting or not.}\label{tab:res}
\end{table}


\section*{Acknowledgements}
This work started during the fourth author stay at the Centro Internazionale per la Ricerca Matematica (CIRM), Fondazione Bruno Kessler (FBK), Trento, Italy as a Visiting Professor from March 3rd to March 29th in 2013. We thank Professor Marco Andreatta for his hospitality at CIRM. This work was finalized during the fourth author stay at University of Trento in July 2014.

\bibliographystyle{siam}
\bibliography{cicli}

\end{document}